\newenvironment{block}[3]{%
  \vspace{2mm}\par%
  \refstepcounter{#2}%
  \noindent\textbf{%
    #1 \arabic{#2}.
  }%
}{%
  \par%
}
\newenvironment{thm}[1][]{%
  \begin{block}{\textsc{Theorem}}{theorem}{#1}%
  \slshape%
}{%
  \end{block}%
  \vspace{2mm}%
}
\newenvironment{lem}[1][]{%
  \begin{block}{\textsc{Lemma}}{theorem}{#1}%
  \slshape%
}{%
  \end{block}%
  \vspace{2mm}%
}
\newenvironment{cor}[1][]{%
  \begin{block}{\textsc{Corollary}}{theorem}{#1}%
  \slshape%
}{%
  \end{block}%
  \vspace{2mm}%
}
\newenvironment{proof}[1][Proof]{%
  \par\noindent\emph{#1}.
}{%
  \eop
  \bigskip%
}
\renewcommand\caption[1]{\small\refstepcounter{figure}%
\begin{center}\textbf{Figure \thefigure .}\ #1\end{center}\normalsize}
\newcommand{\eop}{\hspace*{\fill}\nolinebreak$\Box$\nolinebreak\par}
\newcommand{\old}[1]{}
\def\cS{{\cal S}}
\def\cC{{\cal C}}
\def\cB{{\cal B}}
\definecolor{PZgreen}{RGB}{0,158,0}
\date{\today}
\begin{document}

\begin{center}
        \noindent\textbf{\Large
        {Graphs with equal domination\\[3mm] and covering numbers}}
\end{center}

\vspace{10mm}
\begin{center}
\textsc{Andrzej Lingas${}^1$, Mateusz Miotk${}^2$, Jerzy Topp${}^{2,3}$, and Pawe\l{}~\.Zyli\'nski${}^2$ }
\\[3mm]
${}^1$ Lund University, 221-00 Lund, Sweden\\
{\small \texttt{ Andrzej.Lingas@cs.lth.se}}
\\[2mm]
${}^2$ University of Gda\'{n}sk, 80-308 Gda\'nsk, Poland\\[2mm]
${}^3$ The State University of Applied Sciences in Elbl{\k{a}}g, 82-300 Elbl{\k{a}}g, Poland \\

{\small \texttt{\{mmiotk,j.topp,zylinski\}@inf.ug.edu.pl}}
\end{center}

\medskip
\begin{abstract}
\noindent A dominating set of a graph $G$ is a set $D\subseteq V_G$ such that every vertex in $V_G-D$ is adjacent to at least one vertex in $D$, and the domination number $\gamma(G)$ of $G$ is the minimum cardinality of a dominating set of $G$. A set $C\subseteq V_G$ is a covering set of $G$ if every edge of $G$ has at least one vertex in $C$. The covering number $\beta(G)$ of $G$ is the minimum cardinality of a covering set of $G$. The set of connected graphs~$G$ for which $\gamma(G)=\beta(G)$ is denoted by $\cC_{\gamma=\beta}$, while $\cB$ denotes the set of all connected bipartite graphs in which the domination number is equal to the cardinality of the smaller partite set. In this paper, we provide alternative characterizations of graphs belonging to  $\cC_{\gamma=\beta}$ and $\cB$. Next, we present a quadratic time algorithm for recognizing bipartite graphs belonging to $\cB$, and, as a side result, we conclude that the algorithm of Arumugam et al.~\cite{AJBT13} allows to recognize all the graphs belonging to the set $\cC_{\gamma=\beta}$ in quadratic time either. Finally, we consider the related problem of patrolling grids with mobile guards, and show that this problem can be solved in $O(n \log n + m)$ time, where $n$ is the number of line segments of the input grid and $m$ is the number of its intersection points.

\medskip
\noindent{\bf Keywords:} Domination, covering, independence, guarding grid.\\
{\bf \AmS \; Subject Classification:} 05C69, 05C70
\end{abstract}

\section{Introduction and notation}\label{sec:intro}

In this paper, we follow the notation of~\cite{ChLZ15}. Let $G=(V_G,E_G)$ be a graph with vertex set $V_G$ and edge set $E_G$. For a vertex $v$ of $G$, its {\em neighborhood\/}, denoted by $N_{G}(v)$, is the set of all vertices adjacent to $v$, and the cardinality of $N_G(v)$, denoted by $\deg_G(v)$, is called the {\em degree} of~$v$. The minimum degree of a vertex in $G$ is denoted by $\delta(G)$. A {\em leaf\/} is a~vertex of degree one, while a {\em support vertex} (or {\em support}, for short) is a vertex adjacent to a leaf. A {\em weak support} is a vertex adjacent to exactly one leaf. The set of leaves and supports of a graph $G$ is denoted by $L_G$ and $S_G$, respectively. The {\em closed neighborhood\/} of $v$, denoted by $N_{G}[v]$, is the set $N_{G}(v)\cup \{v\}$.  In general, the {\em  neighborhood\/} of $X$, denoted by $N_{G}(X)$, is defined to be $\bigcup_{v\in X}N_{G}(v)$, and the {\em closed\/} neighborhood of $X$, denoted by $N_{G}[X]$, is the set $N_{G}(X)\cup X$. If $X$ is a set of vertices of a graph $G$ and $x\in X$, then the {\em private neighborhood} of $x$ with respect to $X$ is the set $PN_G[x,X]=N_G[x]- N_G[X-\{x\}]$, and each vertex in $PN_G[x,X]$ is called a~{\em private neighbor\/} of $x$ with respect to $X$. Finally, the {\em distance} between two vertices $u$ and $v$ in $G$ is denoted by $d_G(u,v)$, and the {\em diameter} of $G$ is ${\rm diam}(G)=\max \{ d_G(u,v) \colon  u,v \in G\}$.

A subset $D$ of $V_G$ is a {\em dominating set} of a graph~$G$ if each vertex belonging to the set $V_G -D$ has a~neighbor in $D$. The cardinality of a~minimum dominating set of $G$ is called the~{\em domination number} of $G$ and is denoted by $\gamma(G)$. Every minimum dominating set of $G$ is called a {\em $\gamma$-set} of $G$. A subset $C \subseteq V_G$ is a~{\em covering set} of $G$ if each edge of $G$ has an end-vertex in $C$. The cardinality of a~minimum covering set of $G$ is called the~{\em covering number} of $G$ and is denoted by $\beta(G)$. Any minimum covering set of $G$ is called a {\em $\beta$-set} of~$G$. Finally, a subset $I \subseteq V_G$ is said to be {\em independent\/} in $G$ if no two vertices in it are adjacent. The cardinality of a~maximum independent set of $G$ is called the~{\em independence number} of $G$ and is denoted by $\alpha(G)$. A maximum independent set in $G$ is called an {\em $\alpha$-set} of $G$. The set of all connected graphs $G$ for which $\gamma(G)=\beta(G)$ is denoted by $\cC_{\gamma=\beta}$, while $\cB$ denotes the set of all connected bipartite graphs in which the domination number is equal to the cardinality of the smaller partite set. It is easy to observe that
the complete graph $K_n$ is in $\cC_{\gamma=\beta}$ if and only if $n=2$,
the cycle $C_n$ is in $\cC_{\gamma=\beta}$ if and only if $n=4$,
the path $P_n$ is in $\cC_{\gamma=\beta}$ if and only if $n\in \{2, 3, 4, 5, 7\}$, while
the complete bipartite graph $K_{m,n}$ is in $\cC_{\gamma=\beta}$ if and only if
$\min\{m,n\}\in \{1, 2\}$.

The problem of characterizing the set $\cC_{\gamma=\beta}$  was posed by Laskar and Walikar~\cite{LW81}. Volkmann in~\cite{V94} have characterized some subsets of $\cC_{\gamma=\beta}$. A first complete characterization of the set $\cC_{\gamma=\beta}$ was given by Rall and Hartnell~\cite{HR95}, and independently by Randerath and Volkmann~\cite{RV98}. A simpler characterization was then provided by Wu and Yu~\cite{WY12}, and eventually Arumugam et al.~\cite{AJBT13} proposed another yet characterization, also studying the problem for hypergraphs. Another subset of $\cC_{\gamma=\beta}$, the set of all connected graphs $G$ in which $\gamma(H) = \beta(H)$ for every non-trivial connected induced subgraph $H$ of $G$, was characterized in \cite{AJBT13,DLSZ16} (see also \cite{TZ}).

In this paper, in Theorem~\ref{thm:gamma_B-nowa-wersja}, we provide an alternative characterization of the set~${\cal C}_{\gamma=\beta}$ in terms of $\alpha$-sets. Since $\alpha$-sets and $\beta$-sets are related by the Gallai's theorem~\cite{G59}, our characterization is natural (with respect to relations between $\gamma, \beta$, and $\alpha$), however, from the algorithmic point of view, our characterization is less practical than, for example, that in~\cite{AJBT13}, which allows to recognize whether a~graph $G$ belongs to $\cC_{\gamma=\beta}$  in $O(\sum_{v \in V_G} \deg^2(v))$ time, while ours --- does not. Next, in Theorem~\ref{thm:main2}, we provide an alternative characterization of the graphs belonging to the set $\cB$. Then, in Theorem~\ref{thm:T<=Tgamma}, we provide a constructive characterization of all the trees in the set $\cB$. Next, we discuss a quadratic time algorithm for recognizing bipartite graphs belonging to the set $\cB$,  and then, based upon an analogous argument, we conclude that the algorithm of Arumugam et al.~\cite{AJBT13} recognizes all the graphs belonging to the set $\cC_{\gamma=\beta}$ in quadratic time either. Finally, we consider the related problem of patrolling grids with mobile guards, and show that this problem can be solved in $O(n \log n + m)$ time, where $n$ is the number of line segments of the input grid and $m$ is the number of its intersection points. We emphasize that our proof techniques are similar to those in~\cite{AJBT13,HR95,RV98,WY12}, and so all the aforementioned non-constructive characterizations possess similarities.

\section{Alternative characterization of the set $\cC_{\gamma=\beta}$}\label{sec:alt}

In our characterizations of the graphs belonging to the set $\cC_{\gamma=\beta}$ or $\cB$ we use the following Gallai's theorem which relates the cardinality of the largest independent set and the cardinality of the smallest covering set in a graph.

\begin{lem}{\rm\cite{G59}} \label{Gallai} If $G$ is a graph, then $\alpha(G)+\beta(G)=|V_G|$. \end{lem}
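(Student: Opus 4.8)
The plan is to exploit the elementary complementary relationship between independent sets and covering sets. The single observation on which everything rests is this: a set $I\subseteq V_G$ is independent in $G$ if and only if its complement $V_G-I$ is a covering set of $G$. I would verify this directly from the definitions. To say that $I$ is independent means that no edge of $G$ has \emph{both} its end-vertices in $I$; equivalently, every edge of $G$ has at least one end-vertex outside $I$, that is, in $V_G-I$; and this last statement is precisely the assertion that $V_G-I$ is a covering set. Thus complementation $X\mapsto V_G-X$ furnishes a bijection between the family of independent sets of $G$ and the family of covering sets of $G$, and it reverses the inclusion order.

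With this correspondence in hand, the theorem follows by optimizing on each side. First I would take an $\alpha$-set $I$, i.e.\ an independent set with $|I|=\alpha(G)$. By the observation, $V_G-I$ is a covering set, so by minimality of $\beta(G)$ we obtain $\beta(G)\le |V_G-I|=|V_G|-\alpha(G)$, which rearranges to $\alpha(G)+\beta(G)\le |V_G|$. For the reverse inequality I would start from a $\beta$-set $C$, a covering set with $|C|=\beta(G)$. Again by the observation, $V_G-C$ is independent, so by maximality of $\alpha(G)$ we obtain $\alpha(G)\ge |V_G-C|=|V_G|-\beta(G)$, which rearranges to $\alpha(G)+\beta(G)\ge |V_G|$. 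Combining the two inequalities yields $\alpha(G)+\beta(G)=|V_G|$, as required.

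Honestly, there is no genuine obstacle in this argument: the entire content is packed into the biconditional of the first paragraph, and the rest is a routine pair of extremal estimates. The only point that deserves a moment's care is making sure the two inequalities run in opposite directions---one bounds $\beta$ from above using a maximum independent set, the other bounds $\alpha$ from below using a minimum covering set---and that the bijection is genuinely order-reversing so that extremal objects on one side correspond to extremal objects on the other. Since this is a classical result quoted from~\cite{G59}, I would expect the authors either to cite it without proof or to present exactly this two-line complementation argument.
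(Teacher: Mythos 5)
Your proof is correct and complete: the paper states this result as Lemma~1 with only a citation to Gallai's original work and gives no proof of its own, and your complementation argument ($I$ is independent if and only if $V_G-I$ is a covering set, followed by the two extremal inequalities) is precisely the standard classical proof of this identity. Nothing is missing, and the order-reversing bijection you describe is exactly the right mechanism.
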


We now present a characterization of the graphs belonging to the set $\cC_{\gamma=\beta}$ and give a~self-contained proof of this characterization.

\begin{thm}\label{thm:gamma_B-nowa-wersja}  Let $G$ be a connected graph of order at least two, and let $I$ be an $\alpha$-set of~$G$. Then $\gamma(G)=\beta(G)$ if and only if the following conditions are satisfied:
    \begin{itemize}
    \item[$(1)$] Each support vertex of $G$ belonging to $I$ is a weak support and each of its non-leaf neighbors is a support.
	   \item[$(2)$] If $vu$ is an edge of the graph $G-I$, then both vertices $v$ and $u$ are supports in $G$.
		\item[$(3)$] If $x$ and $y$ are vertices belonging to $V_G-(I \cup L_G\cup S_G)$ and $d_G(x,y)=2$, then there are at least two vertices $\overline{x}$ and $\overline{y}$ in $I$ such that $N_G(\overline{x})= N_G(\overline{y})= \{x,y\}$.
    \end{itemize}
\end{thm}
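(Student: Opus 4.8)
The plan is to work throughout with the fixed $\alpha$-set $I$ and its complement $C=V_G-I$. Since $I$ is a maximum independent set, $C$ is a minimum covering set, so by Gallai's theorem (Lemma~\ref{Gallai}) $\beta(G)=|C|=|V_G|-\alpha(G)$; moreover $C$, being a covering set of a graph without isolated vertices, is a dominating set, whence $\gamma(G)\le\beta(G)$ always. Thus it suffices to decide when $\gamma(G)\ge|C|$, and the case $G=K_2$ is trivial (both conditions and $\gamma=\beta$ hold), so I assume $G$ has order at least three, whereby no vertex is simultaneously a leaf and a support. The single structural fact I would extract first is the private-neighbour dichotomy: whenever $C$ is a minimal dominating set, each $c\in C$ has $PN_G[c,C]\ne\emptyset$, and one checks that every element of $PN_G[c,C]$ other than $c$ is a leaf of $G$ whose only neighbour is $c$; consequently each $c$ is either a support of $G$ or is isolated in $G-I$ (so that $c$ is its own private neighbour and all its neighbours lie in $I$).

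For the forward implication I would assume $\gamma(G)=\beta(G)$, so that $C$ is a minimum and hence minimal dominating set, and verify the three conditions by contradiction: in each case a failure lets me delete two vertices of $C$ and insert one vertex of $I$, producing a dominating set of size $|C|-1<\gamma(G)$. Condition~$(2)$ is immediate from the dichotomy, since the endpoints of an edge of $G-I$ are not isolated in $G-I$ and hence must be supports. For~$(1)$, a support $s\in I$ has all its leaves in $C$; two such leaves could be traded for $s$, while a non-support non-leaf neighbour $t$ of $s$ is isolated in $G-I$, so $\{t,\ell\}$ (with $\ell$ the leaf of $s$) can be traded for $s$ — in both cases one checks that domination is preserved. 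For~$(3)$, if $x,y\in V_G-(I\cup L_G\cup S_G)$ and $d_G(x,y)=2$, then both are isolated in $G-I$ with all neighbours in $I$; writing $W=\{w\in I:N_G(w)=\{x,y\}\}$, I would show that $|W|\le 1$ allows $\{x,y\}$ to be replaced by a single common neighbour (if $W=\emptyset$) or by the unique element of $W$, again shrinking the set. Hence $|W|\ge 2$, which is exactly the assertion of~$(3)$.

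For the converse I would fix an arbitrary $\gamma$-set $D$ and prove $|D|\ge|C|$; since $|D|=|C\cap D|+|I\cap D|$ and $|C|=|C\cap D|+|C-D|$, this is equivalent to $|I\cap D|\ge|C-D|$. Setting $A=C-D$ and $B=I\cap D$, I would produce a matching saturating $A$ in the bipartite graph on $A\cup B$ carrying the edges of $G$. No vertex of $A$ is isolated in this graph: if $a\in A$ had no neighbour in $B$ it would, being dominated, have a neighbour in $C\cap D$, making it a support by~$(2)$, but then its leaf would lie in $I\cap D=B$, a contradiction. I would then match each leaf of $A$ to its support, which lies in $B$ and, being a weak support by~$(1)$, receives only that one leaf; each support of $A$ to one of its leaves, which lies in $B$ (forced into $I$ by~$(2)$ and into $D$ since the support is not); and each remaining ``middle'' vertex of $A$ — which by~$(2)$ has all of its neighbours in $I$ — into $B$ by verifying Hall's condition.

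The hard part will be this last, middle block, and it is where condition~$(3)$ is essential. Two middle vertices competing for a common vertex of $B$ are non-adjacent (by~$(2)$) and hence at distance $2$, so~$(3)$ supplies at least two vertices of $I$ whose neighbourhood is exactly that pair; since both of their neighbours lie outside $D$, these two vertices must themselves lie in $B$ and are dedicated to that pair alone. Organising the middle vertices into the components of their bipartite neighbourhood graph, a component with $s$ middle vertices has a connected ``competition'' graph (two middle vertices joined when they share a $B$-neighbour), hence at least $s-1$ edges, each contributing two dedicated vertices of $B$; this yields at least $2(s-1)\ge s$ neighbours in $B$ for $s\ge 2$ (the case $s=1$ being clear as $A$ has no isolated vertex), which is exactly where the numeral ``two'' in~$(3)$ is needed, and it gives Hall's condition. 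I would finally check that the three blocks use disjoint vertices of $B$ — the only overlap not ruled out automatically, a middle vertex adjacent to the support of a leaf of $A$, is excluded by the second half of~$(1)$ — so the matching saturates $A$ and $|D|\ge|C|=\beta(G)$, which together with $\gamma(G)\le\beta(G)$ finishes the proof.
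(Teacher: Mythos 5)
Your proposal is correct, and while the necessity half is essentially the paper's argument in different packaging, your sufficiency half takes a genuinely different route. For necessity, the paper likewise first shows that $J=V_G-I$ is a $\gamma$-set and then runs the same three exchange arguments; your private-neighbour dichotomy for the minimal dominating set $V_G-I$ (each vertex of $V_G-I$ is a support or has all its neighbours in $I$) is a clean way to organize those exchanges, and it does give condition $(2)$ for free where the paper argues directly. The real divergence is in the converse: the paper supposes $\gamma(G)<|J|$, picks a $\gamma$-set $D$ with $|D\cap J|$ maximum, and eliminates three cases ($D\varsubsetneq J$, $D\subseteq I$, and the mixed case), the last via pigeonhole plus condition $(3)$ and a swap that contradicts the extremal choice of $D$; you instead take an arbitrary $D$ and exhibit a matching of $C-D$ into $I\cap D$, proving $|D|\ge |C|$ outright, with no extremal choice of $D$ and no case analysis (indeed your argument never uses minimality of $D$, so it shows every dominating set satisfies $|C-D|\le |I\cap D|$ --- a slightly stronger and more structural conclusion than the paper's). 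The leaf/support/middle decomposition of $C-D$, the placement of the dedicated vertices $\overline{x},\overline{y}$ in $I\cap D$, and the disjointness of the three blocks (via the second half of $(1)$) all check out. The one spot you must tighten is the invocation of Hall's theorem: you verify $|N(S)\cap B|\ge|S|$ only when $S$ is the full middle-vertex set of a component of the bipartite neighbourhood graph, and component-level counts do not imply Hall's condition in general. Here your own counting does extend, but you should say so explicitly: for arbitrary $S\subseteq M$, consider the competition graph induced on $S$; a singleton of $S$ has a $B$-neighbour private to it relative to $S$ (a shared one would create a competition edge inside $S$), each component of size $s_i\ge 2$ is connected and so has at least $s_i-1$ competition edges, each contributing its two dedicated vertices of $B$, and all these $B$-vertices are pairwise distinct because a dedicated vertex's neighbourhood is exactly its pair; hence $|N_G(S)\cap B|\ge 2\sum_{i\colon s_i\ge 2}(s_i-1)+(\text{number of singletons})\ge |S|$. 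With that one sentence supplied, your matching argument is complete and yields the theorem.
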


\begin{proof} Assume that $\gamma(G)=\beta(G)$.  Then, since $I$ is an $\alpha$-set of $G$ and $G$ has no isolated vertex, $J=V_G-I$ is a dominating set of $G$, and therefore $\gamma(G)\le |J|$. In addition, $\beta(G)=\gamma(G)\le |J|=|V_G|-|I|= |V_G|-\alpha(G)=\beta(G)$ (as $\alpha(G)+\beta(G)=|V_G|$ by Lemma \ref{Gallai}). Thus $\gamma(G)=|J|$ and $J$ is a~$\gamma$-set of $G$. To prove the condition (1), consider a vertex $v$ belonging to $S_G\cap I$. Then $|N_G(v)\cap L_G|=1$ (otherwise $|N_G(v)\cap L_G|\ge 2$ and $J'= (J-(N_G(v)\cap L_G))\cup \{v\}$ would be a dominating set of $G$, which is impossible as $|J'|<|J|=\gamma(G)$). Thus, let $v'$ be the only element of $N_G(v)\cap L_G$. It remains to prove that $N_G(v)\subseteq L_G\cup S_G$. Suppose to the contrary that $N_G(v)-(L_G\cup S_G)\not=\emptyset$ and consider a vertex $x\in N_G(v)-(L_G\cup S_G)$. Then $N_G(x)\cap L_G=\emptyset$ and we now claim that $J''= (J-\{v',x\})\cup \{v\}$ is a dominating set of $G$. To prove this, it suffices to show that every vertex $y \in V_G-J''$  has a neighbor in $J''$. This is obvious if $y\in \{v', x\}\cup (I-N_G(x))$. Thus assume that $y\in N_G(x)\cap I$. In this case we have $N_G(y)\cap(J-\{v',x\})\not=\emptyset$ (as $N_G(y)\subseteq J-\{v'\}$ and $|N_G(y)|\ge 2$) and therefore $N_G(y)\cap J''\not=\emptyset$. Consequently, $J''$ is a~dominating set of $G$, but this contradicts the minimality of $J$ as $|J''|<|J|= \gamma(G)$. This completes the proof of (1).

To prove (2), let us consider an edge $vu$ of $G-I$. From the fact that $I$ is an $\alpha$-set of $G$ it follows immediately that  $\{v, u\} \cap L_G=\emptyset$. It remains to prove that $\{v, u\} \subseteq S_G$. Suppose to the contrary that $v\not \in S_G$ or $u\not \in S_G$, say $v\not \in S_G$. In this case we claim that $J-\{v\}$ is a dominating set of $G$. To observe this, it suffices to show that $N_G(y)\cap (J-\{v\})\not=\emptyset$ if $y\in V_G-(J-\{v\})= \{v\}\cup (I-N_G(v))\cup (N_G(v)\cap I)$. The statement is obvious if $y=v$, as $u\in N_G(v)\cap (J-\{v\})$. Thus assume that $y\in I-N_G(v)$. In this case $y\in I= V_G-J$ and $y\not\in N_G(v)$, and therefore, $N_G(y)\cap J\not=\emptyset$ (since $J$ is a~dominating set of $G$) and $v\not\in N_G(y)$. Consequently, $N_G(y)\cap (J-\{v\}) \not=\emptyset$. Finally assume that $y\in N_G(v)\cap I$. Then $N_G(y) \subseteq J$ (since $I$ is independent), $|N_G(y)|\ge 2$ (as $y\not\in L_G$), and consequently, $N_G(y)\cap (J-\{v\}) \not=\emptyset$. This proves that $J-\{v\}$ is a~dominating set of $G$, contrary to the minimality of $J$. This finishes the proof of (2).

To prove the condition (3), let $x$ and $y$ be vertices belonging to $J-(L_G\cup S_G)$ and such that $\deg_G(x,y)=2$. Now, if $b\in N_G(x)\cap N_G(y)$, then, since $|(J-\{x,y\})\cup \{b\}|<|J|$, the set $(J-\{x,y\})\cup \{b\}$ does not dominate $G$. Therefore there exists $\overline{x} \in I-\{b\}$ for which $N_G(\overline{x})\subseteq \{x,y\}$. Thus, since $\deg_G(\overline{x})\ge 2$ (as $\{x, y\}\cap (L_G\cup S_G)=\emptyset$), $N_G(\overline{x}) =\{x,y\}$ and $\overline{x}\in I$ (otherwise, if $\overline{x}\in J$ then $J-\{\overline{x}\}$ would be a smaller dominating set of $G$). Similarly, since $(J-\{x,y\})\cup \{\overline{x}\}$ does not dominate $G$, there exists $\overline{y} \in I- \{\overline{x}\}$ for which $N_G(\overline{y})= \{x,y\}$. This proves the condition (3).

Assume now that the conditions (1)--(3) are satisfied. We claim that $\gamma(G)=|J|$, where $J=V_G-I$. Suppose to the contrary that $\gamma(G)<|J|$.  Let $D$ be a $\gamma$-set of $G$ with $|D\cap J|$ as large as possible. We get a contradiction in the three possible cases: (1) $D\varsubsetneq J$, (2) $D\subseteq I$, (3) $D\cap J\not=\emptyset$ and $D\cap I\not= \emptyset$.

{\em Case $1$}. If $D\varsubsetneq J$, then $J-D\not=\emptyset$ and for a vertex $v\in J-D$ there exists $u\in N_G(v)\cap D$. Then $vu$ is an edge in $G-I$ and therefore $v\in S_G$ (by the condition (2)). Thus the set $N_G(v)\cap L_G$ is nonempty. Moreover $N_G(v)\cap L_G\subseteq D$ (since every element of $N_G(v)\cap L_G$ has to be dominated) and $N_G(v)\cap L_G\subseteq I$ (by the choice of $I$), a contradiction to $D\varsubsetneq J$.

{\em Case $2$}. If $D\subseteq I$, then $D=I$ (as $I$ is independent and no proper subset of $I$ dominates all vertices in $I$). The set $J$ also is independent as otherwise the set $S_G\cap J$ would be non-empty (by the condition (2)) and it would be a subset of $D$ (by the choice of $D$), which is impossible (as $D$ and $J$ are disjoint). Consequently, $G$ is a bipartite graph and the sets $I=D$ and $J$ form a bipartition of $V_G$ into independent sets, and $|I|= |D|= \gamma(G)< |J|\le \alpha(G)=|I|$, a contradiction.

{\em Case $3$}. Finally assume that $D\cap J\not=\emptyset$ and $D\cap I\not= \emptyset$. In this case, from the supposition $|J|>\gamma(G)= |D|$, it follows that $|J-D|>|I\cap D|\ge 1$.  Now the choice of $D$, the maximality of $I$, and the condition (2) imply that each vertex belonging to $J-D$ has a~neighbor in $I\cap D$. Therefore the pigeonhole principle implies that there are two vertices $x$ and $y$ in $J-D$  which are adjacent to the same vertex in $I\cap D$. Since $N_G(I\cap L_G) \subset D$ (by the choice of $D$) as well as $J \cap L_G \subset D$ (by (1) and the choice of $D$), the vertices $x$ and $y$ belong to $J-(L_G\cup S_G)$. Therefore, by (3), there exist vertices $\overline{x}$ and $\overline{y}$ in $I$ for which $N_G(\overline{x})= N_G(\overline{y})=\{x,y\}$. Furthermore, the vertices $\overline{x}$ and $\overline{y}$ belong to $I\cap D$ (as otherwise $\overline{x}$ and $\overline{y}$ would not be dominated by $D$). Now the set $D'= (D-\{\overline{x},\overline{y}\})\cup \{x, y\}$ is a~dominating set of $G$, which is impossible as $|D'|=|D|$ and $|D'\cap J|>|D\cap J|$. This proves that $\gamma(G)=|J|$ and implies that $\gamma(G)=\beta(G)$ as $|J|=|V_G|-|I|= |V_G|-\alpha(G)= \beta(G)$ (by Lemma \ref{Gallai}).
\end{proof}

From  Theorem \ref{thm:gamma_B-nowa-wersja} we have the following two immediate corollaries.

\begin{cor} \label{cor:delta2}{\rm \cite{V96}} If a graph $G$ belongs to the set $\cC_{\gamma=\beta}$, then $\delta(G) \le 2$. \end{cor}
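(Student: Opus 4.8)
The plan is to argue by contradiction: assume $\delta(G)\ge 3$ and produce a vertex of degree $2$, which is absurd. First I would record the easy structural consequences of $\delta(G)\ge 3$. Since no vertex has degree $1$, the graph has no leaves, and hence no supports either, i.e.\ $L_G=S_G=\emptyset$. Fix an $\alpha$-set $I$ of $G$. Because $G\in\cC_{\gamma=\beta}$, Theorem~\ref{thm:gamma_B-nowa-wersja} applies, and with $L_G=S_G=\emptyset$ its three conditions simplify sharply.

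The key simplification comes from condition~$(2)$: an edge inside $G-I$ would force both of its endpoints to be supports, but there are none, so $J:=V_G-I$ is an independent set. Since $I$ is independent by definition of an $\alpha$-set, $G$ is then bipartite with bipartition $(I,J)$. As $\delta(G)\ge 1$, the graph has no isolated vertices, so $J\neq\emptyset$.

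Next I would exhibit two vertices of $J$ at distance exactly $2$. Pick any $x\in J$; by bipartiteness every neighbor of $x$ lies in $I$, and since $\deg_G(x)\ge 3$ there is a vertex $a\in I$ adjacent to $x$. As $a\in I$ with $\deg_G(a)\ge 3$ and all neighbors of $a$ lie in $J$, the vertex $a$ has a neighbor $y\in J$ with $y\neq x$. Then $x$ and $y$ are distinct, non-adjacent (being in the same part), and share the common neighbor $a$, so $d_G(x,y)=2$; moreover $x,y\in J=V_G-(I\cup L_G\cup S_G)$, since $L_G=S_G=\emptyset$.

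Finally I would invoke condition~$(3)$, which yields a vertex $\overline{x}\in I$ with $N_G(\overline{x})=\{x,y\}$, whence $\deg_G(\overline{x})=2$, contradicting $\delta(G)\ge 3$. This forces $\delta(G)\le 2$. I expect no serious obstacle: the only points requiring care are that $x$ and $y$ can be chosen distinct (guaranteed by $\deg_G(a)\ge 3$) and that they avoid $L_G\cup S_G$ (immediate, as both sets are empty), so that condition~$(3)$ is indeed applicable.
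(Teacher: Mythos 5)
Your proof is correct and follows exactly the route the paper intends: the corollary is stated as an immediate consequence of Theorem~\ref{thm:gamma_B-nowa-wersja}, and you derive it directly from that theorem by noting that $\delta(G)\ge 3$ kills $L_G$ and $S_G$, so condition~$(2)$ makes $J=V_G-I$ independent and condition~$(3)$ then produces a vertex of degree~$2$, a contradiction. All the small points you flag (distinctness of $x$ and $y$, nonemptiness of $J$, applicability of condition~$(3)$) are handled correctly, so there is nothing to add.
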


\begin{cor} \label{cor:gamma-beta-delta2}{\rm \cite{HR95,RV98}} If a graph $G$ belongs to the set $\cC_{\gamma=\beta}$ and $\delta(G)=2$, then $G$ is a~bipartite graph. \end{cor}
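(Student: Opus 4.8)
The plan is to derive this directly from Theorem~\ref{thm:gamma_B-nowa-wersja}, and in particular from its condition~(2), after first making the simple structural observation that the hypothesis $\delta(G)=2$ leaves $G$ with no leaves and hence no supports.

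First I would record that since $\delta(G)=2$, no vertex of $G$ has degree one, so $L_G=\emptyset$. A support vertex is by definition adjacent to a leaf, so the absence of leaves forces $S_G=\emptyset$ as well. This is the only place the numerical hypothesis $\delta(G)=2$ enters, and it is what makes condition~(2) collapse into something decisive.

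Next I would fix an $\alpha$-set $I$ of $G$ and set $J=V_G-I$. Because $G\in\cC_{\gamma=\beta}$, we have $\gamma(G)=\beta(G)$, so the ``only if'' direction of Theorem~\ref{thm:gamma_B-nowa-wersja} applies and conditions~(1)--(3) all hold. Condition~(2) asserts that whenever $vu$ is an edge of $G-I$, both $v$ and $u$ lie in $S_G$. Since $S_G=\emptyset$, no such edge can exist, and therefore $G-I$ has no edges at all; equivalently, $J$ is an independent set of $G$. (Conditions~(1) and~(3) are not needed here, as they speak only about supports and about vertices outside $I\cup L_G\cup S_G$.)

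Finally, $I$ is independent because it is an $\alpha$-set, and $J=V_G-I$ is independent by the previous step, so the ordered pair $(I,J)$ is a partition of $V_G$ into two independent sets. This is precisely the assertion that $G$ is bipartite, completing the argument. I do not expect any genuine obstacle here: the whole content of the corollary is the observation that $\delta(G)=2$ annihilates $S_G$, after which condition~(2) of Theorem~\ref{thm:gamma_B-nowa-wersja} immediately yields the independence of $J$.
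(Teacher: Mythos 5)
Your proposal is correct and follows exactly the route the paper intends: the paper presents this corollary as an immediate consequence of Theorem~\ref{thm:gamma_B-nowa-wersja}, and your argument --- $\delta(G)=2$ forces $L_G=S_G=\emptyset$, so condition~(2) leaves $G-I$ edgeless and $(I,V_G-I)$ is a bipartition into independent sets --- is precisely the omitted verification. No gaps; the application of the theorem is legitimate since $G\in\cC_{\gamma=\beta}$ is connected and, having minimum degree two, has order at least two.
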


Fig.~\ref{rys-gamma=beta} shows a graph $G$ belonging to the set $\cC_{\gamma=\beta}$. In this case the solid vertices form an $\alpha$-set $I$ of $G$, while $V_G-I$ is a $\gamma$- and $\beta$-set of $G$. Certainly, $I$ satisfies the conditions (1)--(3) of Theorem \ref{thm:gamma_B-nowa-wersja}. On the other hand the graph $F$ shown in Fig.~\ref{rys-gamma=beta} does not belong to the set $\cC_{\gamma=\beta}$, as $\gamma(F)=2$, while $\beta(F)=3$. Thus, no $\alpha$-set $I$ of $F$ satisfies all three conditions (1)--(3) of Theorem \ref{thm:gamma_B-nowa-wersja}. It is easy to check that the sets $I_1 =\{v_2, v_4, v_6\}$, $I_2 =\{v_1, v_4, v_6\}$, $I_3 =\{v_1, v_3, v_5\}$ are the only $\alpha$-sets of $F$, and, in addition, the $\alpha$-set $I_k$ ($k\in\{1, 2, 3\}$) satisfies precisely the conditions $\{(1), (2), (3)\}-\{(k)\}$ of Theorem~\ref{thm:gamma_B-nowa-wersja}.

\begin{figure}[t!] \begin{center}
{\epsfxsize=4.5in \epsffile{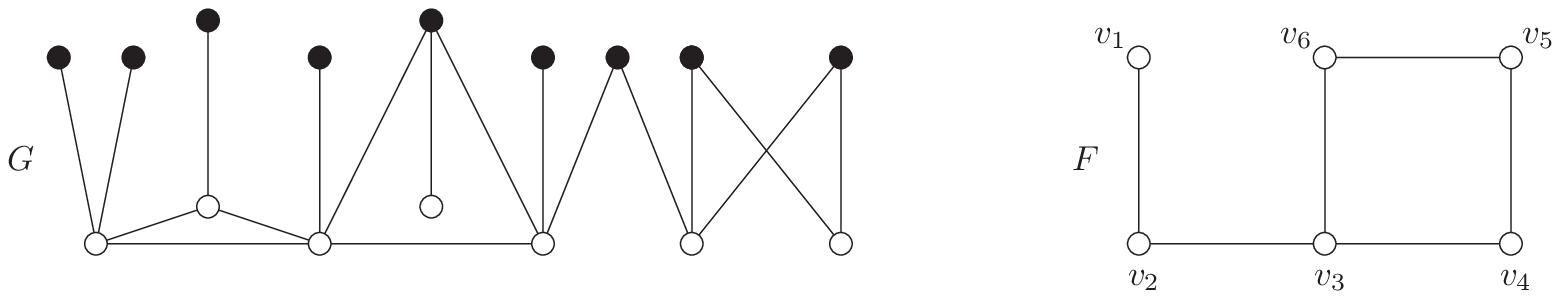}} \vspace{-4mm}
\caption{Graphs $G$ and $F$, where $G \in \cC_{\gamma=\beta}$, while $F \not\in \cC_{\gamma=\beta}$.} \label{rys-gamma=beta}
\end{center}\end{figure}

\section{Bipartite graphs with the largest domination number}\label{sec:trees}
It is obvious that if $G = ((A,B),E_G)$ is a bipartite graph, then each of the sets $A$ and $B$ is dominating in $G$ and therefore $\gamma(G) \le \min\{|A|,|B|\}$. Here we study graphs $G = ((A,B),E_G)$ for which the equality $\gamma(G) = \min\{|A|,|B|\}$ holds, that is, we study graphs belonging to the set $\cB$ of bipartite graphs in which the domination number is equal to the cardinality of the smaller partite set. Such graphs were studied in \cite{HR95} and~\cite{RV98}. Our characterization given in the next theorem is similar but different from those in~\cite{HR95,RV98} (see Theorems 3.6 and 4.1 in \cite{HR95}, and Theorems 3.3 and 3.4  in~\cite{RV98}). Recently Miotk et al.~\cite{MTZxx} observed that the graphs belonging to the set $\cB$ can also be characterized in terms of some graph operations.


\begin{thm} \label{thm:main2} Let $G=((A,B),E_G)$ be a connected bipartite graph with $1\le |A|\le |B|$. Then the following statements are equivalent:
\begin{itemize}
\item[$(1)$] $\gamma(G)=|A|$.
\item[$(2)$] $\gamma(G)=\beta(G)=|A|$.
\item[$(3)$] $G$ has the following two properties:
\begin{itemize}
\item[{\rm (a)}] Each support vertex of $G$ belonging to $B$ is a weak support and each of its non-leaf neighbors is a support.
\item[{\rm (b)}]
If $x$ and $y$ are vertices belonging to $A-(L_G\cup S_G)$ and $d_G(x,y)=2$, then there are at least two vertices $\overline{x}$ and $\overline{y}$ in $B$ such that $N_G(\overline{x})= N_G(\overline{y})= \{x,y\}$.
\end{itemize}
\end{itemize}
\end{thm}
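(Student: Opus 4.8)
The plan is to prove the chain of equivalences $(1)\Leftrightarrow(2)$ and $(2)\Leftrightarrow(3)$, leveraging the fact that a bipartite graph is, by definition, two-colorable with independent parts $A$ and $B$.

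For $(1)\Leftrightarrow(2)$, first I would observe that the smaller part $A$ is both dominating and independent. Since $B$ is also independent, $A$ is an $\alpha$-set whenever $|A|=\alpha(G)$; more to the point, because $G$ is bipartite and $A$ is an independent dominating set of size $|A|$, I would argue that $B$ is a covering set, so $\beta(G)\le|B|$, and dually $\beta(G)\le|A|$ would follow from showing $B$ is independent and $A$ covers. The cleanest route is: if $\gamma(G)=|A|$, then since $\gamma(G)\le\beta(G)$ always fails in general but here $\gamma\le\beta$ need not hold directly, I would instead use that $|A|\le|B|$ forces $A$ to be an $\alpha$-set. Indeed $\alpha(G)\ge|B|\ge|A|$, and if $\gamma(G)=|A|$ then by the general bound $\gamma(G)\le\beta(G)=|V_G|-\alpha(G)$ (Gallai, Lemma~\ref{Gallai}) we get $|A|\le|V_G|-\alpha(G)$, i.e. $\alpha(G)\le|B|$, whence $\alpha(G)=|B|$ and $\beta(G)=|A|$. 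This gives $(1)\Rightarrow(2)$, and $(2)\Rightarrow(1)$ is trivial.

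For $(2)\Leftrightarrow(3)$, the strategy is to invoke Theorem~\ref{thm:gamma_B-nowa-wersja} with the specific $\alpha$-set $I=B$. Having established in the previous step that $\gamma(G)=\beta(G)=|A|$ is equivalent to $\alpha(G)=|B|$ (so that $B$ is an $\alpha$-set), I would apply Theorem~\ref{thm:gamma_B-nowa-wersja} to the connected graph $G$ with this particular choice $I=B$. Condition~(1) of that theorem becomes condition~(a) here verbatim, since support vertices of $G$ in $I=B$ are exactly those addressed in (a). The key simplification is that condition~(2) of Theorem~\ref{thm:gamma_B-nowa-wersja}, which concerns edges inside $G-I=G[A]$, is vacuously satisfied: since $A$ is independent, $G-I=G[A]$ has no edges at all. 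Finally, condition~(3) of the theorem, applied to vertices of $V_G-(I\cup L_G\cup S_G)=A-(L_G\cup S_G)$, becomes exactly condition~(b). Thus conditions~(1)--(3) of Theorem~\ref{thm:gamma_B-nowa-wersja} collapse precisely to conditions~(a)--(b) of the present theorem.

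The main obstacle I anticipate is the subtlety that Theorem~\ref{thm:gamma_B-nowa-wersja} is stated for an \emph{arbitrary} $\alpha$-set $I$, whereas here I want to pin down $I=B$. I must verify that $B$ is genuinely an $\alpha$-set before invoking the theorem, which is exactly the content secured in the $(1)\Leftrightarrow(2)$ step via the Gallai identity; and I must confirm that the sets $L_G$ and $S_G$ interact correctly with the bipartition, namely that the ``non-leaf neighbor is a support'' clause in~(a) matches clause~(1) once one notes that a support in $B$ has its leaf-neighbors in $A$. A secondary care-point is ensuring the equivalence is not merely one-directional: since Theorem~\ref{thm:gamma_B-nowa-wersja} is an ``if and only if,'' and conditions~(1)--(3) reduce to (a)--(b) with~(2) automatic, the biconditional transfers cleanly, giving $(2)\Leftrightarrow(3)$ as desired.
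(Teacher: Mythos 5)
Your $(1)\Leftrightarrow(2)$ step and your $(2)\Rightarrow(3)$ step are sound and coincide with the paper's own proof: under (1), $\alpha(G)\ge|B|$ is clear, and Gallai's identity (Lemma~\ref{Gallai}) together with $\gamma(G)\le\beta(G)$ (which does hold here, since $G$ is connected of order at least two and hence has no isolated vertices --- your parenthetical remark that ``$\gamma(G)\le\beta(G)$ always fails in general'' is a slip, though the inequality chain you actually use is correct) yields $\alpha(G)=|B|$ and $\beta(G)=|A|$. Then $B$ is an $\alpha$-set, and Theorem~\ref{thm:gamma_B-nowa-wersja} applied with $I=B$ gives (a) and (b); your observation that condition (2) of that theorem is vacuous because $G-B=G[A]$ has no edges is exactly right and is implicitly how the paper proceeds as well.

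The genuine gap is in $(3)\Rightarrow(2)$. To invoke the converse direction of Theorem~\ref{thm:gamma_B-nowa-wersja} with $I=B$, you must know \emph{beforehand} that $B$ is an $\alpha$-set of $G$: that theorem's standing hypothesis is ``let $I$ be an $\alpha$-set,'' and if $\alpha(G)>|B|$ the theorem says nothing about $I=B$. Under hypothesis (3) alone you only know $\alpha(G)\ge|B|$. Your proposed fix --- that $B$ being an $\alpha$-set ``is exactly the content secured in the $(1)\Leftrightarrow(2)$ step'' --- is circular: that step derives $\alpha(G)=|B|$ \emph{from} the hypothesis $\gamma(G)=|A|$, which is precisely what you are trying to establish from (3). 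The implication ``(a) and (b) imply $\alpha(G)=|B|$'' is true, but only as a consequence of the full theorem; there is no shortcut to it. The paper closes this direction with a direct argument that your plan omits and that you would need to reproduce: suppose $\gamma(G)<|A|$ and choose a $\gamma$-set $D$ maximizing $|D\cap A|$; since $|A-D|>|D\cap B|\ge 1$ and every vertex of $A-D$ has a neighbor in $D\cap B$, the pigeonhole principle yields $x,y\in A-D$ with a common neighbor in $D\cap B$; property (a) together with the choice of $D$ forces $x,y\in A-(L_G\cup S_G)$; property (b) then supplies $\overline{x},\overline{y}\in B$ with $N_G(\overline{x})=N_G(\overline{y})=\{x,y\}$, which must lie in $D$, and the exchange $D'=(D-\{\overline{x},\overline{y}\})\cup\{x,y\}$ contradicts the maximality of $|D\cap A|$. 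In effect this redoes Case~3 of the converse proof of Theorem~\ref{thm:gamma_B-nowa-wersja} for $I=B$ without assuming $B$ is maximum --- the bipartite structure kills Cases~1 and~2, which is why the paper's self-contained argument is short, but it cannot be replaced by a bare citation of the earlier theorem.
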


\begin{proof}  Assume first that  $\gamma(G)=|A|$. We claim that then $\alpha(G)=|B|$ and $\beta(G)=|A|$. It is obvious that $\alpha(G)\ge |B|$ (as $B$ is independent in $G$). Thus, it suffices to prove that $\alpha(G)\le |B|$. Suppose to the contrary that $\alpha(G)> |B|$, and let $I$ be an $\alpha$-set of $G$. Then $|I|=\alpha(G)>|B|$,  $V_G-I$ is a dominating set of $G$, and therefore $\gamma(G)\le |V_G-I|= |V_G|-|I| <|V_G|-|B|= |A|= \gamma(G)$, a contradiction. Thus,  $\alpha(G)=|B|$ and $B$ is an $\alpha$-set of $G$. From the equality $\alpha(G)=|B|$ and from Lemma~\ref{Gallai} it follows that $\beta(G)=|A|$. Consequently, (2) $\gamma(G)=\beta(G)=|A|$. Now, since $\gamma(G)= \beta(G)$ and $B$ is an $\alpha$-set of $G$, Theorem \ref{thm:gamma_B-nowa-wersja} implies that $G$ has the properties (a) and~(b) of the statement (3).

Assume now that $G$ has the properties (a) and~(b) of the statement (3). In a standard way (as in~\cite{HR95,RV98}), we prove that $\gamma(G)=|A|$. Since $A$ is a dominating set of $G$, we have $\gamma(G)\le |A|$, and, therefore, it suffices to show that $\gamma(G)\ge |A|$. Suppose to the contrary that $\gamma(G)<|A|$. Let $D$ be a~$\gamma$-set of $G$ with $|D\cap A|$ as large as possible. Since $|A-D|>|D\cap B|\ge 1$ and since each vertex in $A-D$ has a~neighbor in $D\cap B$, the pigeonhole principle implies that there are two vertices $x$ and $y$ in $A-D$  which are adjacent to the same vertex in $D\cap B$. Since $N_G(B\cap L_G) \subset D$ (by the choice of $D$) as well as $A \cap L_G \subset D$ (by (a) and the choice of $D$), the vertices $x$ and $y$ belong to $A-(L_G \cup S_G)$. Therefore, by (b), there exist vertices $\overline{x}$ and $\overline{y}$ in $B$ for which $N_G(\overline{x})= N_G(\overline{y})=\{x,y\}$. The vertices $\overline{x}$ and $\overline{y}$ belong to $D\cap B$ (as otherwise $\overline{x}$ and $\overline{y}$ would not be dominated by~$D$). Now, the set $D'= (D-\{\overline{x},\overline{y}\})\cup \{x, y\}$ is a dominating set of $G$, which is impossible as $|D'|=|D|$ and $|D'\cap A|>|D\cap A|$. \end{proof}

As an immediate consequence of Theorem \ref{thm:main2} and its proof, we have the following results.

\begin{cor} \label{wniosek-alpha-beta}
Let $G=((A,B),E_G)$ be a connected bipartite graph with $1\le |A|\le |B|$. If $\gamma(G)=|A|$, then $\alpha(G)=|B|$ and $\beta(G)=|A|$.
\end{cor}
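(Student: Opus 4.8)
The plan is to observe that both equalities are actually established inside the proof of Theorem~\ref{thm:main2}: the implication $(1)\Rightarrow(2)$ is proved precisely by first deducing $\alpha(G)=|B|$ and then $\beta(G)=|A|$ from the hypothesis $\gamma(G)=|A|$. So rather than invoke new machinery, I would simply isolate that argument and present it as a short self-contained deduction.

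First I would handle the independence number. Since $G=((A,B),E_G)$ is bipartite, the set $B$ is independent, which gives the trivial bound $\alpha(G)\ge |B|$. For the reverse inequality I would argue by contradiction. Suppose $\alpha(G)>|B|$ and pick an $\alpha$-set $I$. Because $G$ is connected of order at least two, it has no isolated vertex, and hence the complement $V_G-I$ is a dominating set of $G$. This yields
\[
\gamma(G)\le |V_G-I|=|V_G|-\alpha(G)<|V_G|-|B|=|A|=\gamma(G),
\]
a contradiction. Therefore $\alpha(G)=|B|$.

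Having pinned down $\alpha(G)=|B|$, the covering number follows immediately from Gallai's identity. By Lemma~\ref{Gallai} we have $\alpha(G)+\beta(G)=|V_G|=|A|+|B|$, and substituting $\alpha(G)=|B|$ gives $\beta(G)=|A|$, as claimed.

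There is no genuine obstacle here; the statement is a clean corollary. The only point that deserves attention is the single structural fact doing the work, namely that connectedness (equivalently, the absence of isolated vertices) is exactly what guarantees that the complement of a maximum independent set dominates $G$; this is what converts the strict inequality $\alpha(G)>|B|$ into a contradiction with $\gamma(G)=|A|$.
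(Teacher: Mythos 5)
Your proposal is correct and follows essentially the same route as the paper: the corollary is stated there as an immediate consequence of the first paragraph of the proof of Theorem~\ref{thm:main2}, which contains exactly your argument --- $\alpha(G)\ge |B|$ since $B$ is independent, the contradiction $\gamma(G)\le |V_G|-\alpha(G)<|A|$ via the dominating set $V_G-I$, and then $\beta(G)=|A|$ from Lemma~\ref{Gallai}. Nothing further is needed.
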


\begin{cor} \label{wniosek-B-zawarte-w-C} The set $\cB$ is a subset of the set $\cC_{\gamma=\beta}$. \end{cor}

\begin{cor} \label{wniosek5dladrzew} Let $T$ be a tree. If $(A,B)$ is a bipartition of $T$ and $1\le |A|\le |B|$, then the following statements are equivalent:
\begin{itemize}
\item[$(1)$] $\gamma(T)=|A|$.
 \item[$(2)$] $\gamma(T)=\beta(T)=|A|$.
\item[$(3)$] $T$ has the following two properties:
\begin{itemize}
\item[{\rm (a)}] If $v\in B\cap S_T$, then $v$ is a weak support and $N_T(v)-L_T\subseteq S_T$.
\item[{\rm (b)}] If $x$ and $y$ are vertices belonging to $A-(L_T\cup S_T)$, then $d_T(x,y)\not=2$ $($or, equivalently,  $|N_T(z) \cap (A-S_T)|\le 1$ for every $z$ belonging to $B-(L_T\cup S_T)$$)$.
    \end{itemize} \end{itemize} \end{cor}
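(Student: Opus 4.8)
The plan is to derive Corollary~\ref{wniosek5dladrzew} as a direct specialization of Theorem~\ref{thm:main2} to the case where the bipartite graph is a tree. Since a tree is a connected bipartite graph, and its unique bipartition (up to swapping the parts) is exactly $(A,B)$ with $1\le |A|\le |B|$, Theorem~\ref{thm:main2} applies verbatim. Thus the equivalence of statements (1) and (2) is immediate, and it remains only to show that condition (3) of the corollary is equivalent to condition (3) of Theorem~\ref{thm:main2} \emph{when $G=T$ is a tree}. Property (a) is literally the same statement written with $N_T(v)-L_T\subseteq S_T$ in place of the phrase ``each of its non-leaf neighbors is a support,'' so no work is needed there beyond noting the rephrasing.

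The only real content is property (b). First I would observe that in Theorem~\ref{thm:main2}(b), the hypothesis is $d_G(x,y)=2$ for $x,y\in A-(L_G\cup S_G)$, and the conclusion demands two common neighbors $\overline{x},\overline{y}\in B$ with $N_G(\overline{x})=N_G(\overline{y})=\{x,y\}$. The key point is that in a tree there can be \emph{at most one} path of length two between any two vertices, since two distinct common neighbors of $x$ and $y$ would create a $4$-cycle, contradicting acyclicity. Hence for a tree the conclusion of Theorem~\ref{thm:main2}(b) can never be met: it would force two distinct vertices $\overline{x}\neq\overline{y}$ both adjacent to both $x$ and $y$, i.e.\ a cycle $x\,\overline{x}\,y\,\overline{y}\,x$. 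Therefore, for a tree, the implication in Theorem~\ref{thm:main2}(b) holds if and only if its hypothesis is never satisfied, that is, if and only if no two vertices $x,y\in A-(L_T\cup S_T)$ satisfy $d_T(x,y)=2$. This is exactly the first formulation of condition (b) in the corollary.

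Next I would verify the parenthetical reformulation ``$|N_T(z)\cap(A-S_T)|\le 1$ for every $z\in B-(L_T\cup S_T)$.'' The link is that two vertices $x,y\in A$ have $d_T(x,y)=2$ precisely when they share a common neighbor $z$, which necessarily lies in $B$. I would argue that requiring no such pair $x,y$ in $A-(L_T\cup S_T)$ is equivalent to requiring that no vertex $z\in B$ has two neighbors in $A-(L_T\cup S_T)$. A small point to check is the restriction of $z$ to $B-(L_T\cup S_T)$ rather than all of $B$: a support vertex $z\in B\cap S_T$ is already constrained by property (a) to be a weak support whose non-leaf neighbors are supports, so any two neighbors of such a $z$ in $A$ cannot both lie in $A-S_T$; and a leaf $z\in B\cap L_T$ has only one neighbor. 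Thus restricting to $z\in B-(L_T\cup S_T)$ loses nothing once (a) is assumed, and on that set $N_T(z)\cap(A-S_T)=N_T(z)\cap(A-(L_T\cup S_T))$ since no neighbor of such a $z$ is a leaf.

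The step I expect to require the most care is this last reconciliation between the ``$d_T(x,y)\neq 2$'' formulation and the ``$|N_T(z)\cap(A-S_T)|\le 1$'' formulation, because it implicitly uses property (a) to handle the vertices of $A$ and $B$ that are leaves or supports, and one must confirm that the set subtraction $A-S_T$ versus $A-(L_T\cup S_T)$ causes no discrepancy. Everything else is a transcription of Theorem~\ref{thm:main2} together with the elementary acyclicity observation that kills the ``two common neighbors'' condition. Hence the corollary follows, and I would present it as a short deduction rather than a fresh induction.
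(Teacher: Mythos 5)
Your proposal is correct and matches the paper's intended derivation: the paper states this corollary as an immediate consequence of Theorem~\ref{thm:main2}, and your specialization --- noting that two distinct common neighbors $\overline{x},\overline{y}$ of $x$ and $y$ would force a $4$-cycle, so condition (3b) of Theorem~\ref{thm:main2} degenerates in a tree to $d_T(x,y)\neq 2$ --- is exactly the implicit argument. Your careful check that the parenthetical reformulation $|N_T(z)\cap(A-S_T)|\le 1$ is equivalent only in the presence of property (a) (to rule out support vertices $z\in B$ with two neighbors in $A-(L_T\cup S_T)$) is a correct and welcome elaboration of a point the paper leaves unstated.
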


The {\em corona\/} $F \circ K_1$ of a graph $F$ is the graph formed from $F$ by adding a new vertex $v'$ and edge $vv'$ for each vertex $v$ of $F$. A graph $G$ is said to be a {\em corona graph\/} if $G = F \circ K_1$ for some graph $F$. We note that a graph $G$ is a corona graph if and only if each vertex of $G$ is a leaf or it is adjacent to exactly one leaf of $G$, and consequently every corona graph belongs to the set $\cC_{\gamma=\beta}$. Payan and Xuong \cite{PayanXuong}, and Fink et al. \cite{FJKR85} have proved  that for a connected graph $G$ of even order is $\gamma(G)=|V_G|/2$ if and only if $G$ is the cycle $C_4$ or the corona $F\circ K_1$ for any connected graph $F$ (see also~\cite{TV} for a short proof). From this (or directly from Theorem \ref{thm:main2}) we immediately have the following corollary.

\begin{cor}\label{prop:corona_gamma}
Let $G=((A,B),E_G)$ be a connected bipartite graph with $|A| = |B|$. Then $\gamma(G)=|A|$ if and only if $G$ is the cycle $C_4$ or the corona of a connected bipartite graph.
\end{cor}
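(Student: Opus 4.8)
The plan is to derive Corollary~\ref{prop:corona_gamma} from Theorem~\ref{thm:main2}, specialized to the balanced case $|A|=|B|$, and then reconcile the resulting structural conditions with the known Payan--Xuong/Fink et al.\ characterization of connected graphs of even order satisfying $\gamma(G)=|V_G|/2$. Since $|A|=|B|$ gives $|V_G|=2|A|$, the hypothesis $\gamma(G)=|A|$ is exactly $\gamma(G)=|V_G|/2$. Thus one direction is immediate: if $G$ is $C_4$ or a corona $F\circ K_1$ of a connected (bipartite) graph $F$, the cited theorem yields $\gamma(G)=|V_G|/2=|A|$, and for the corona $G=F\circ K_1$ bipartiteness of $G$ forces $F$ to be bipartite. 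For the converse I would assume $\gamma(G)=|A|$ and invoke the equivalence $(1)\Leftrightarrow(3)$ of Theorem~\ref{thm:main2} to obtain properties (a) and (b); the task is then to show that, when $|A|=|B|$, these properties plus connectedness force $G$ to be either $C_4$ or a corona.

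The cleaner route, which I would actually carry out, is to cite Payan--Xuong and Fink et al.\ directly: if $\gamma(G)=|V_G|/2$ for a connected graph $G$ of even order, then $G\in\{C_4\}\cup\{F\circ K_1\}$, and conversely. First I would note that $C_4$ is bipartite with $|A|=|B|=2$ and $\gamma(C_4)=2$, and that $F\circ K_1$ has even order $2|V_F|$ with $\gamma(F\circ K_1)=|V_F|$, which equals half the order. Then I would observe that a corona $F\circ K_1$ is bipartite if and only if $F$ is bipartite: the added pendant edges cannot create odd cycles, so odd cycles in $F\circ K_1$ correspond exactly to odd cycles in $F$. For the bipartition, in a corona each original vertex of $F$ together with its pendant leaf must lie in opposite partite classes, and one checks that the two classes then have equal size $|V_F|$, consistent with $|A|=|B|$.

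The main subtlety is matching the two descriptions: the cited characterization is stated for arbitrary graphs of even order, while Corollary~\ref{prop:corona_gamma} is stated inside the bipartite world with the specific bipartition $(A,B)$. I would argue that for a connected bipartite $G$ with $|A|=|B|$, the condition $\gamma(G)=|A|$ is literally $\gamma(G)=|V_G|/2$, so the Payan--Xuong/Fink et al.\ theorem applies verbatim and returns $G=C_4$ or $G=F\circ K_1$; bipartiteness of $G$ then restricts $F$ to bipartite graphs, giving exactly the stated conclusion. Conversely, both $C_4$ and coronas of connected bipartite graphs are connected bipartite graphs with equal partite classes satisfying $\gamma=|V_G|/2=|A|$. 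I expect the only real care needed is the book-keeping that a corona over a bipartite base is bipartite with balanced classes, and that the balanced hypothesis $|A|=|B|$ is what pins us to even order; the domination-number computations themselves are routine and covered by the cited results, so no fresh argument via conditions (a)--(b) is strictly required, though it could serve as an alternative self-contained verification.
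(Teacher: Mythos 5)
Your proposal is correct and takes essentially the same route as the paper: the paper derives Corollary~\ref{prop:corona_gamma} immediately from the Payan--Xuong/Fink et al.\ characterization of connected graphs of even order with $\gamma(G)=|V_G|/2$, mentioning the direct route via Theorem~\ref{thm:main2} only as an alternative, just as you do. The bookkeeping you spell out --- that $|A|=|B|$ makes $\gamma(G)=|A|$ literally $\gamma(G)=|V_G|/2$, and that a corona $F\circ K_1$ is bipartite with balanced partite classes exactly when $F$ is bipartite --- is the same routine verification the paper leaves implicit.
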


Simple examples illustrating relations between graphs considered in Theorems~\ref{thm:gamma_B-nowa-wersja} and~\ref{thm:main2}, and Corollaries~\ref{cor:delta2},~\ref{cor:gamma-beta-delta2}, and~\ref{wniosek-alpha-beta}
--\ref{prop:corona_gamma} are shown in Fig.~\ref{fig:examples}.

\begin{figure}[h!]
\begin{center}
{\epsfxsize=5.5in \epsffile{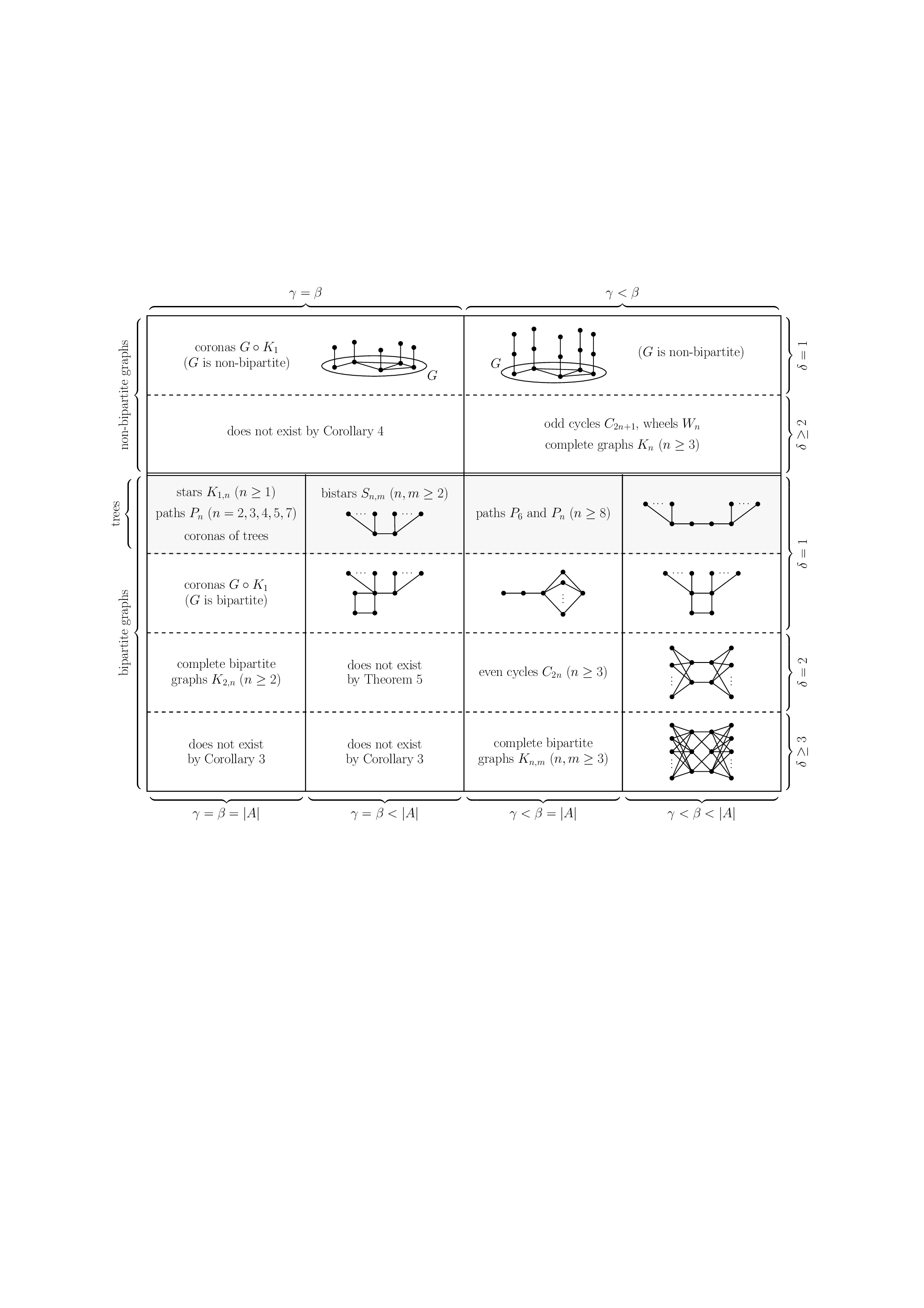}}
\vspace{-2mm}
\caption{}\label{fig:examples}
\end{center}
\end{figure}

Let  ${\cal T}_{\rm max}$ denote the set of trees for which the domination number is equal to the size of its smaller partite set. We now provide a constructive characterization of the trees belonging to the set ${\cal T}_{\rm max}$. Similar constructive characterizations of trees for different domination related parameters or properties have been presented in e.g.~\cite{ADR16,HK17,HS16,R17}. Our characterization is based on four simple operations. To present one of these operations, we introduce some additional term. A vertex $v$ of a graph $G$ is called {\em $\gamma^-$-critical} if $\gamma(G-v) < \gamma(G)$ (or, equivalently, if $\gamma(G-v) = \gamma(G)-1$). Such vertices have been intensively studied (see e.g. \cite{Haynes-Henning,Samodivkin,Sampath-Neeralagi}). In particular, Sampathkumar and Neeralagi \cite{Sampath-Neeralagi} have observed that a vertex $v$ of $G$ is $\gamma^-$-critical if and only if $PN_G[v,D]=\{v\}$ for some $\gamma$-set $D$ containing $v$.

Let ${\cal T}$ be the family of trees $T$ that can be obtained from a sequence of trees $T_0, \ldots, T_k$, where $k\ge 0$, $T_0=K_2$ and $T=T_k$. In addition, if $k\ge 1$, then for each $i=1,\ldots,k$, the tree $T_i$ can be obtained from the tree $T'=T_{i-1}$ with the bipartition $(A',B')$ by one of the following four operations ${\cal O}_1$, ${\cal O}_2$, ${\cal O}_3$, and ${\cal O}_4$ defined below and illustrated in Fig.~\ref{operacjeO1-O4}.

\begin{itemize} \item[${\cal O}_1$] Add a new vertex $b$ to $T'$ and join $b$ to an $A'$-vertex $a'$ of $T'$.
\item[${\cal O}_2$] Add a new vertex $a$ to $T'$ and join $a$ to a $B'$-vertex $b'$ of $T'$ such that $N_{T'}(b')\subseteq S_{T'}$ and $b'\not\in L_{T'}$.
\item[${\cal O}_3$] Add two new vertices $a$ and $b$ to $T'$ and join $b$ to $a$ and to an $A'$-vertex $a'$ of $T'$ that is a support of $T'$.
\item[${\cal O}_4$] Add two new vertices $a$ and $b$ to $T'$ and join $a$ to $b$ and to a $B'$-vertex $b'$ of $T'$ which is not a $\gamma^-$-critical vertex in $T'$.
\end{itemize}

\begin{figure*}[!h]
\begin{center}
{\epsfxsize=5in \epsffile{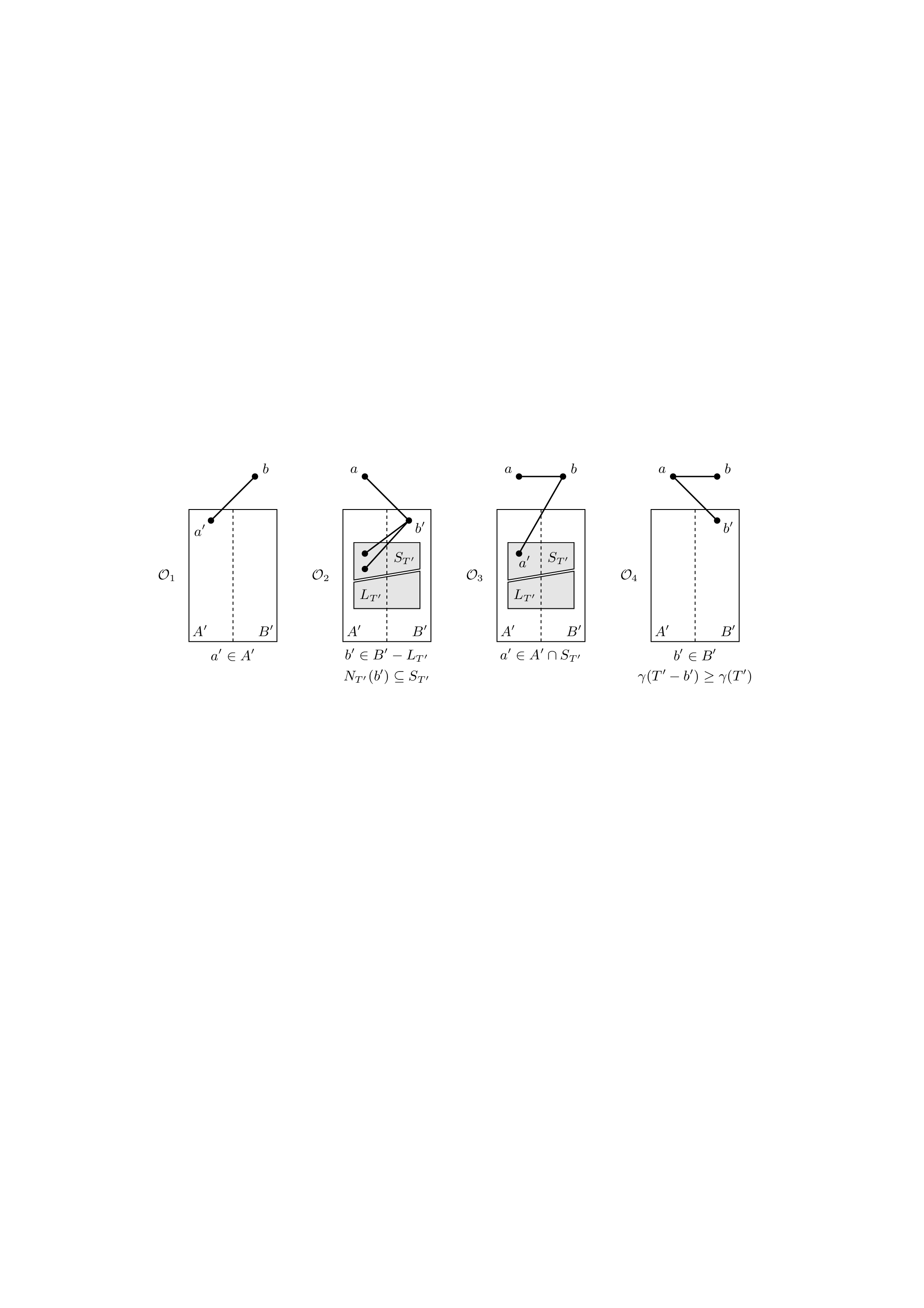}}

\vspace{-3mm}
\caption{}\label{operacjeO1-O4}
\end{center}
\end{figure*}

Each of the vertices $a'$ in the above defined operations ${\cal O}_1$ and ${\cal O}_3$, and $b'$ in the operations ${\cal O}_2$ and ${\cal O}_4$, is called the {\em attacher\/} of $T'$. We shall prove that the trees in ${\cal T}$ are precisely the trees belonging to the family ${\cal T}_{\rm max}$. Our first aim is to show that each tree in ${\cal T}$ belongs to the family ${\cal T}_{\rm max}$. For this purpose we prove the following lemma.

\begin{lem}\label{thm:Tgamma<=T}
If $T \in {\cal T}$, then $T \in {\cal T}_{\rm max}$. \end{lem}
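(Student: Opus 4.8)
The plan is to induct on the length $k$ of the construction sequence $T_0,\dots,T_k$ witnessing $T\in{\cal T}$, carrying along the strengthened invariant that $|A_i|\le|B_i|$ and $\gamma(T_i)=|A_i|$, so that the smaller partite set $A_i$ is simultaneously a $\gamma$-set and hence $T_i\in{\cal T}_{\rm max}$. The base case $T_0=K_2$ is immediate, since $|A_0|=|B_0|=1=\gamma(K_2)$. For the inductive step I would first record a uniform reduction. In each of the four operations the new $A$-side is again a dominating set: the old side $A'$ dominates $T'$, and one checks directly that $A'$ (for ${\cal O}_1$) or $A'\cup\{a\}$ (for ${\cal O}_2,{\cal O}_3,{\cal O}_4$) also reaches the newly added vertices. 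This gives $\gamma(T_{\mathrm{new}})\le|A_{\mathrm{new}}|$, and since the opposite partite set is dominating as well, $\gamma(T_{\mathrm{new}})\le|B_{\mathrm{new}}|$. Consequently, as soon as I prove the matching lower bound $\gamma(T_{\mathrm{new}})=|A_{\mathrm{new}}|$, the inequality $|A_{\mathrm{new}}|=\gamma(T_{\mathrm{new}})\le|B_{\mathrm{new}}|$ follows for free and the invariant is restored. As $|A_{\mathrm{new}}|$ exceeds $|A'|$ by $0$ for ${\cal O}_1$ and by $1$ for ${\cal O}_2,{\cal O}_3,{\cal O}_4$, everything reduces to controlling how $\gamma$ changes under each operation.

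Operation ${\cal O}_1$ only attaches a leaf $b$ to an $A'$-vertex, so $A'$ still dominates and $\gamma$ cannot drop: relocating a new leaf in $D_1$ to its support turns any $\gamma$-set of $T_1$ into a dominating set of $T'$ of no larger size, whence $\gamma(T_1)\ge\gamma(T')=|A'|$. For ${\cal O}_3$ and ${\cal O}_4$ I would isolate a single auxiliary fact, since both attach a pendant path of length two, $r\,\text{--}\,p\,\text{--}\,q$ with $q$ the new leaf, at a vertex $r$ of $T'$:
\[
\gamma(\widehat T)=\gamma(T')+1 \iff r \text{ is not } \gamma^-\text{-critical in } T',
\]
and $\gamma(\widehat T)=\gamma(T')$ otherwise. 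The bound $\gamma(\widehat T)\le\gamma(T')+1$ is clear. For the lower bound I take a $\gamma$-set $D$ of $\widehat T$, normalize it so that $p\in D$ and $q\notin D$, and observe that $D\setminus\{p\}\subseteq V(T')$ dominates $V(T')\setminus\{r\}$; splitting on whether $r$ is itself dominated by $D\setminus\{p\}$ and invoking $\gamma(T'-r)\ge\gamma(T')$ (failure of criticality) yields $\gamma(\widehat T)\ge\gamma(T')+1$. The converse uses the Sampathkumar--Neeralagi description of $\gamma^-$-critical vertices recalled above.

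With this fact in hand, ${\cal O}_3$ and ${\cal O}_4$ are finished by checking non-criticality of the attacher. In ${\cal O}_4$ this is exactly the hypothesis imposed on $b'$. In ${\cal O}_3$ the attacher $a'$ is a support, and a support is \emph{never} $\gamma^-$-critical: if $\ell$ is a leaf at $a'$, then $\ell$ is isolated in $T'-a'$ and so lies in every dominating set of $T'-a'$, which therefore dominates $a'$ in $T'$ too; hence $\gamma(T'-a')\ge\gamma(T')$. In both cases $\gamma$ rises by exactly one, matching the growth of $|A|$, and the invariant is restored as above.

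The remaining case ${\cal O}_2$ is the one I expect to be the main obstacle: a single leaf $a$ is hung on a $B'$-vertex $b'$ with $b'\notin L_{T'}$ and $N_{T'}(b')\subseteq S_{T'}$, and $\gamma(T_2)=\gamma(T')+1$ holds precisely when $b'$ belongs to no $\gamma$-set of $T'$. Thus the crux is the structural lemma that \emph{a non-leaf vertex all of whose neighbours are supports lies in no $\gamma$-set}, which I would prove by a private-neighbour argument. If $b'$ belonged to a $\gamma$-set $D$, then $PN_{T'}[b',D]\neq\emptyset$ and $PN_{T'}[b',D]\subseteq N_{T'}[b']$. If some neighbour $s$ of $b'$ is private to $b'$, then the leaf at $s$ must itself lie in $D$ to be dominated, contradicting that $s$ is private; otherwise $b'$ is its own only private neighbour, so no neighbour $s_i$ of $b'$ is in $D$, forcing every leaf $\ell_i$ at $s_i$ into $D$, and then replacing $\{b',\ell_1\}$ by $\{s_1\}$ gives a strictly smaller dominating set, a contradiction. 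Hence $b'$ lies in no $\gamma$-set, $\gamma(T_2)=|A'|+1=|A_2|$, and the invariant holds. Closing the induction proves $T\in{\cal T}_{\rm max}$. The two domination lemmas --- the $\gamma^-$-critical dichotomy for pendant-$P_2$ attachment and the no-$\gamma$-set claim for ${\cal O}_2$ --- carry all the content; the bookkeeping with partite-set sizes is then routine through the inequalities above (and Lemma~\ref{Gallai} if one prefers to phrase the wrap-up via $\alpha$ and $\beta$).
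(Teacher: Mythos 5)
Your proof is correct, and while it shares the unavoidable skeleton with the paper's proof --- induction along the construction sequence with one case per operation, and the case ${\cal O}_1$ handled identically --- it differs in three genuine ways worth noting. First, your bookkeeping of partite-set sizes is cleaner: you derive $|A_{\mathrm{new}}|\le|B_{\mathrm{new}}|$ \emph{after the fact} from $\gamma(T_{\mathrm{new}})=|A_{\mathrm{new}}|$ and the observation that the opposite partite set is always dominating, whereas the paper, in the ${\cal O}_2$ case, must first establish $|A'|<|B'|$ by invoking Corollary~\ref{prop:corona_gamma} (the Payan--Xuong/Fink et al.\ corona characterization) to rule out $|A'|=|B'|$; your route removes that external dependency entirely. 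Second, you unify ${\cal O}_3$ and ${\cal O}_4$ under a single pendant-$P_2$ lemma ($\gamma$ increases by exactly one upon attaching a path of length two at $r$ if $r$ is not $\gamma^-$-critical), discharging ${\cal O}_3$ by the neat observation that a support vertex is never $\gamma^-$-critical --- the paper instead treats Case~3 directly by normalizing to a leaf-free $\gamma$-set of $T$, forcing $b$ and the attacher $a'$ into it, and deleting $b$; its Case~4 is essentially your pendant lemma in disguise. Third, for ${\cal O}_2$ you factor out the standalone structural lemma that a non-leaf vertex whose neighbours are all supports lies in no $\gamma$-set of $T'$, combined with the standard leaf-attachment dichotomy; the paper runs the same private-neighbour mechanics (all non-leaf neighbours of $b'$ are supports, hence in the normalized $\gamma$-set, so $PN_T[b',D]=\{a\}$ and $D-\{b'\}$ dominates $T'$) but inside $T$ rather than $T'$. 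I checked the delicate points of your two auxiliary lemmas --- the normalization $p\in D$, $q\notin D$, the split on whether $r$ is dominated by $D\setminus\{p\}$, and the two private-neighbour cases for $b'$ (a private neighbour $s\in N(b')$ forces its leaf into $D$, contradicting privacy; $PN[b',D]=\{b'\}$ forces all leaves $\ell_i$ into $D$ and the swap $\{b',\ell_1\}\to\{s_1\}$ shrinks $D$) --- and all of them go through. In sum, your version is more modular and self-contained, at the cost of proving two portable sub-lemmas; the paper's is shorter per case thanks to the uniform leaf-free normalization but leans on the corona result.
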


\begin{proof} Let $T_0, T_1, T_2,\ldots$ be a sequence of trees, where $T_0=K_2$ (with an arbitrary chosen bipartition $(A',B')$ of its 2-element vertex set) and every $T_n$ can be obtained from the tree $T_{n-1}$ by one of the operations  ${\cal O}_1$, ${\cal O}_2$, ${\cal O}_3$, and ${\cal O}_4$. By induction on $n$ we shall prove that $T_n\in {\cal T}_{\rm max}$ for every $n\in \mathbb{N}$. If $n=0$, then $T_0=K_2$ and
$T_0\in {\cal T}_{\rm max}$ (as $\gamma(K_2)=\beta(K_2)=1$). Assume that $n\ge 1$ and $T_{n-1}=((A',B'),E)$ is a tree, where $(A',B')$ is a bipartition of $T_{n-1}$ for which $|A'|\le |B'|$, and $\gamma(T_{n-1}) = \beta(T_{n-1})=|A'|$. We consider four cases depending on which operation is used to construct the tree $T=T_n$ from $T'=T_{n-1}$.

{\em Case $1$}. {\em $T$ is obtained from $T'$ by Operation ${\cal O}_1$}. In this case $(A',B'\cup\{b\})$ is the bipartition of $T$ and $|A'|<|B'\cup\{b\}|$. Now, since $A'$ is a $\gamma$-set of $T'$ and $b$ is adjacent to the attacher $a'$ belonging to $A'$,  $A'$ is a dominating set of $T$ and therefore $\gamma(T)\le |A'|=\gamma(T')$. Consequently,
$\gamma(T) =|A'|$ (for if it were $\gamma(T)< |A'|$ and if $D$ were a~$\gamma$-set of $T$, then $(D-\{b\})\cup\{a'\}$ would be a dominating set of $T'$ and it would be $\gamma(T')\le |(D-\{b\})\cup\{a'\}|=|D|<|A'| = \gamma(T')$, which is a contradiction). This proves that $T\in {\cal T}_{\rm max}$.

{\em Case $2$}. {\em $T$ is obtained from $T'$ by Operation ${\cal O}_2$}. Now $\gamma(T')= |A'|<|B'|$ (for if it were $\gamma(T')= |A'|=|B'|$, then $T'$ would be the corona of a tree (see Corollary \ref{prop:corona_gamma}) and the operation ${\cal O}_2$ could not be applied to $T'$) and $(A'\cup\{a\},B')$ is the bi\-par\-ti\-tion of $T$. We now claim that $\gamma(T)= |A'\cup \{a\}|=|A'|+1$. The inequality $\gamma(T)\le |A'|+1$ is obvious as $A'\cup \{a\}$ is a~dominating set of $T$. Thus, it remains to prove that $\gamma(T)\ge |A'|+1$. Suppose to the contrary that $\gamma(T) < |A'|+1$. Let $D$ be a $\gamma$-set of $T$. Since $T$ is a connected graph of order at least three, it is obvious that we may assume that no leaf of $T$ belongs to $D$. Then $b'\in D$ and, since $N_T(b')-\{a\}= N_{T'}(b') \subseteq S_{T'}= S_T- \{a\} \subseteq D$, $a$ is the only private neighbor of $b'$ with respect to $D$ in $T$, that is, $PN_T[b',D]=\{a\}$. This implies that $D-\{b'\}$ is a dominating set of $T'$. But then we have $\gamma(T') \le |D-\{b'\}|= \gamma(T)-1< |A'|= \gamma(T')$, a contradiction. This proves that $T\in {\cal T}_{\rm max}$.

{\em Case $3$}. {\em $T$ is obtained from $T'$ by Operation ${\cal O}_3$}. This time $(A'\cup\{a\},B'\cup\{b\})$ is the bipartition of $T$ and, certainly, $\gamma(T)\le |A'\cup \{a\}|=|A'|+1$. It remains to prove that $\gamma(T)\ge |A'|+1$. Suppose to the contrary that $\gamma(T) < |A'|+1$. Let $D$ be a $\gamma$-set of $T$. Since $T$ is a connected graph of order at least three, it is obvious that we may assume that no leaf of $T$ belongs to $D$. This implies that $b$ and the attacher $a'$ are in $D$. Now it is easy to observe that $D-\{b\}$ is a dominating set of $T'$ and therefore $\gamma(T') \le |D-\{b\}|= \gamma(T)-1< |A'|= \gamma(T')$, a contradiction. This proves that $T\in {\cal T}_{\rm max}$.

{\em Case $4$}. {\em $T$ is obtained from $T'$ by Operation ${\cal O}_4$}. In this case $(A'\cup\{a\},B'\cup\{b\})$ is the bipartition of $T$ and $|A'\cup\{a\}|\le |B'\cup\{b\}|$. Therefore $\gamma(T)\le |A'\cup\{a\}| =|A'|+1$. We claim that $\gamma(T)= |A'\cup \{a\}|=|A'|+1$. It remains to prove that $\gamma(T)\ge |A'|+1$. Suppose to the contrary that $\gamma(T) < |A'|+1$. Let $D$ be a $\gamma$-set of $T$. Then $\gamma(T)=|D|\le |A'|=\gamma(T')$. We may assume that $a\in D$ (otherwise we could replace $D$ with $(D-\{b\})\cup\{a\}$). Now, the attacher $b'$ is dominated only by $a$ (as otherwise $D-\{a\}$ would be a dominating set of $T'$ and it would be $\gamma(T')\le |D-\{a\}|<|A'|=\gamma(T')$, a contradiction). Then $D-\{a\}$ is a dominating set of $T'-b'$. Consequently, $\gamma(T'- b')\le |D-\{a\}|<|A'|=\gamma(T')$, contradicting the premise that $b'$ is not $\gamma^-$-critical in $T'$, that is, $\gamma(T'- b')\ge \gamma(T')$ for the attacher $b'$ in Operation~${\cal O}_4$. This proves that $T\in {\cal T}_{\rm max}$ and completes the proof. \end{proof}

We are now ready to provide a constructive characterization of the trees belonging to the family ${\cal T}_{\rm max}$.

\begin{thm}\label{thm:T<=Tgamma} A tree $T$ belongs to the family ${\cal T}_{\rm max}$ if and only if $T$ belongs to the family~${\cal T}$.
\end{thm}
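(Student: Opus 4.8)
The forward inclusion ${\cal T}\subseteq{\cal T}_{\rm max}$ is exactly Lemma~\ref{thm:Tgamma<=T}, so the plan is to establish the converse: every $T\in{\cal T}_{\rm max}$ lies in ${\cal T}$. I would argue by induction on the order $|V_T|$. The base case is $T=K_2$, the seed of ${\cal T}$. For the inductive step, assume $|V_T|\ge 3$, fix the bipartition $(A,B)$ with $|A|\le|B|$ and $\gamma(T)=|A|$, and recall that by Corollary~\ref{wniosek5dladrzew} the tree $T$ enjoys properties (a) and (b). The strategy is to identify the ``last'' operation by examining a longest path $v_0v_1\cdots v_d$ of $T$: then $v_0$ is a leaf, $s:=v_1$ is its support, and every neighbour of $s$ other than $p:=v_2$ is a leaf. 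I would peel off $v_0$ (and possibly $s$), show that the smaller tree $T'$ still lies in ${\cal T}_{\rm max}$, invoke the induction hypothesis to get $T'\in{\cal T}$, and recognise $T$ as obtained from $T'$ by one of ${\cal O}_1,{\cal O}_3,{\cal O}_4$.

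The case analysis is driven by the type and side of $s$. If $s$ is a strong support then, by property (a), $s\in A$ (a $B$-support must be weak); since a strong support exists, Corollary~\ref{prop:corona_gamma} forbids $|A|=|B|$, hence $|A|<|B|$. Here I set $T'=T-v_0$, reversing ${\cal O}_1$ at the attacher $s$; since $A$ still dominates $T'$ and $s$ remains a support, a ``push to the support'' argument gives $\gamma(T')=|A|=|A'|$ with $|A'|\le|B'|$. If $s$ is a weak support with $s\in B$, then $s$ has degree two and, by property (a), $p\in S_T$; I set $T'=T-\{v_0,s\}$ and reverse ${\cal O}_3$ at the attacher $a'=p$, which needs $p$ to be a support of $T'$ --- true because a leaf neighbour of $p$ distinct from $s$ survives. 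Finally, if $s$ is a weak support with $s\in A$, then $s$ has degree two, $p\in B$, and $p$ has a further $A$-neighbour $v_3$ along the path; I set $T'=T-\{v_0,s\}$ and reverse ${\cal O}_4$ at $b'=p$. These three cases are exhaustive and mutually exclusive, so they cover every $T\neq K_2$; note that ${\cal O}_2$ need not be reversed along a longest path, since deleting the lone leaf $v_0$ there would turn the degree-two support $s$ into a leaf, violating the hypothesis $b'\notin L_{T'}$ of ${\cal O}_2$.

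The verifications reduce to two routine ingredients. First, $T'\in{\cal T}_{\rm max}$: the bound $\gamma(T')\le|A'|$ holds because the appropriate restriction of $A$ dominates $T'$, while $\gamma(T')\ge|A'|$ follows by adjoining $s$ to a hypothetical too-small dominating set of $T'$ and contradicting $\gamma(T)=|A|$; one also checks $|A'|\le|B'|$ so that $A'$ is genuinely the smaller side. Second, the side conditions of the operations: the support condition of ${\cal O}_3$ is property (a) as above, and the non-$\gamma^-$-criticality of $b'=p$ in ${\cal O}_4$ I would prove by contradiction --- if $\gamma(T'-p)<\gamma(T')$, then a minimum dominating set of $T'-p$ together with $s$ would dominate all of $T$ using fewer than $|A|$ vertices, contradicting $\gamma(T)=|A|$.

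I expect the main obstacle to be bookkeeping rather than a single deep step: ensuring the split is genuinely exhaustive (which hinges on property (a) to confine strong supports to $A$ and to force $p\in S_T$ in the $B$-support case), and verifying in the two two-vertex-removal cases that $\gamma(T')$ equals $|A|-1$ \emph{exactly}, together with confirming that $T'$ stays connected and of order at least two so the induction hypothesis applies. I would also check the small configurations (stars, $P_3$, $P_4$) to confirm they fall under Case~I-strong, Case~I-weak, or Case~II as claimed, which in turn guarantees that the reduced tree never degenerates below $K_2$.
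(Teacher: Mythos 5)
Your proposal is correct, but it organizes the converse inclusion differently from the paper and in fact yields a slightly stronger statement. The paper's induction first splits on $|A|=|B|$ versus $|A|<|B|$: in the first case it invokes the corona structure from Corollary~\ref{prop:corona_gamma} and reverses ${\cal O}_3$ or ${\cal O}_4$ at a degree-two vertex of the corona; in the second it distinguishes whether $A$ contains a leaf (reversing ${\cal O}_2$ at the support of such a leaf, justified by property (a) of Corollary~\ref{wniosek5dladrzew}) or $A$ is leafless (working at the end of a longest path and reversing ${\cal O}_1$ or ${\cal O}_4$ according to whether $\deg_T(x_1)>2$ or $\deg_T(x_1)=2$). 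You instead classify uniformly by the support $s=v_1$ at the end of a longest path: strong (then $s\in A$ by property (a), and $|A|<|B|$ since neither $C_4$ nor a corona has a strong support, so ${\cal O}_1$ reverses cleanly), weak in $B$ (reverse ${\cal O}_3$ at $p=v_2$, which is a support of $T'$ because property (a) puts $p\in S_T$ and its leaf neighbor is distinct from the degree-two vertex $s$, hence survives), and weak in $A$ (reverse ${\cal O}_4$ at $p$, with a non-criticality argument identical to the paper's in its Subcase 2b). These three cases are exhaustive, the weak cases force $d\ge 3$ so the vertex $v_3$ you need exists and $T'$ never degenerates below $K_2$, and your sketched verifications --- $\gamma(T')$ equal to $|A|$ or $|A|-1$ exactly via the ``adjoin $s$ to a too-small dominating set'' contradiction, the push-to-support step in the strong case, and $|A'|\le|B'|$ in each case --- all go through. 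What your route buys: it absorbs the $|A|=|B|$ case into the general argument rather than treating coronas separately, and it shows ${\cal O}_2$ is redundant for generating ${\cal T}_{\rm max}$, i.e., every tree in the family arises from $K_2$ using only ${\cal O}_1,{\cal O}_3,{\cal O}_4$ --- a by-product the paper's proof does not give (the paper genuinely uses ${\cal O}_2$ when the smaller side $A$ contains a leaf). What the paper's route buys is slightly lighter bookkeeping per case, since its Case 1 is dispatched almost for free by the corona characterization. The only blemish in your write-up is cosmetic: the case names in your closing sentence (``Case I-strong, Case I-weak, Case II'') do not match the three cases you actually defined.
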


\begin{proof} It follows from Lemma \ref{thm:Tgamma<=T} that ${\cal T}\subseteq {\cal T}_{\rm max}$. Conversely, suppose $T$ is a tree belonging to the family ${\cal T}_{\rm max}$. Let $(A,B)$ be a bipartition of $T$, where $1\le |A|\le |B|$. By induction on the order of $T$ we shall  prove that if $\gamma(T)=|A|$, then $T$ belongs to ${\cal T}$, that is, $T$ can be obtained from $K_2$ (which belongs to ${\cal T}$ and ${\cal T}_{\rm max}$) by repeated applications of the operations ${\cal O}_1$, ${\cal O}_2$, ${\cal O}_3$, and ${\cal O}_4$. If $T$ is a~star on at least three vertices, then $T$ can be obtained from $K_2$ by repeated applications of operation ${\cal O}_1$ (that is, attaching leaves to the only $A$-vertex of $K_2$), thus implying that $T\in {\cal T}$. Hence, we may assume
that $\mbox{\rm diam}(T)\ge 3$. We consider two cases: $|A|=|B|$, $|A|<|B|$.

{\it Case} 1. If $|A|=|B|$ and $\gamma(T)=|A|$, then it follows from Corollary   \ref{prop:corona_gamma} that $T$ is the corona of some tree $R$. Thus $T$ has a vertex, say $v$, of degree 2. Let $v'$ be the only non-leaf neighbor of $v$. Let $l$ and $l'$ be the only leaves adjacent to $v$ and $v'$, respectively. Now, it is obvious that the tree $T'=T-\{v,l\}$ is the corona (of the tree $R-v$) and therefore $\gamma(T')= |V_{T'}|/2 = |A|-1=|B|-1$. Thus, $T'\in {\cal T}_{\rm max}$ and the induction hypothesis implies that $T'\in {\cal T}$. Consequently $T\in {\cal T}$, since  $T$ can be rebuilt from $T'$ by applying the operation ${\cal O}_3$ (resp.\ ${\cal O}_4$) if the attacher $v'$ is an $A$-vertex (resp.\ a~$B$-vertex) in $T'$.

{\it Case} 2. If $|A|<|B|$, then we consider two subcases: $A\cap L_T\not= \emptyset$, $A\cap L_T= \emptyset$.

{\it Subcase} 2a. Assume first that $A\cap L_T\not= \emptyset$. Let $v$ be a vertex belonging to $A\cap L_T$, and let $v'$ be the only neighbor of $v$. Then it follows from Corollary~\ref{wniosek5dladrzew} that every vertex belonging to $N_T(v')-\{v\}$ is a support vertex in $T$. Now $(A-\{v\},B)$ is a bipartition of the subtree $T'=T-v$ and it is easy to observe that $\gamma(T')= |A-\{v\}|<|B|$ (for if it were $\gamma(T')< |A-\{v\}|$ and if $D$ were a $\gamma$-set of $T'$, then $D\cup\{v\}$ would be a dominating set of $T$ and it would be $\gamma(T)\le |D\cup\{v\}| = \gamma(T')+1<|A|=\gamma(T)$, a contradiction). Consequently, $T'\in {\cal T}_{\rm max}$ and the induction hypothesis implies that $T'\in {\cal T}$. Now, since every neighbor of $v'$ in $T'$ is a support vertex, the tree $T$ can be rebuilt from $T'$ by applying the operation ${\cal O}_2$ with the attacher $v'$. This proves that $T\in {\cal T}$.

{\it Subcase} 2b. Finally assume that $A\cap L_T= \emptyset$. Then $L_T\subseteq B$ and
$S_T\subseteq A$. Let $(x_0, x_1,\ldots, x_d)$ be the longest path in $T$. Since $x_0$ and $x_d$ are leaves in $T$ and $d=\mbox{\rm diam}(T)\ge 3$, necessarily $x_0\in B$, $x_1\in A$, $x_2\in B$, $x_3\in A$, $\ldots$, $x_d\in B$ and therefore $d\ge 4$. If $\deg_T(x_1)>2$, then $(A,B -\{x_0\})$ is a bipartition of the subtree $T'=T-x_0$ of $T$ and it is obvious that $\gamma(T')= |A|\le |B -\{x_0\}|$. Thus $T'\in {\cal T}_{\rm max}$ and the induction hypothesis implies that $T'\in {\cal T}$. Now the tree $T$ can be rebuilt from $T'$ by applying the operation ${\cal O}_1$ with the attacher $x_1$. If $\deg_T(x_1)=2$, then $(A-\{x_1\},B -\{x_0\})$ is a~bipartition of the subtree $T'=T-\{x_0,x_1\}$ of $T$ and $|A-\{x_1\}|< |B -\{x_0\}|$. It is a simple matter to see that $\gamma(T')= |A-\{x_1\}|=|A|-1$. Thus $T'\in {\cal T}_{\rm max}$ and the induction hypothesis implies that $T'\in {\cal T}$. We now claim that $x_2$ is not a $\gamma^-$-critical vertex in $T'$. Suppose, contrary to our claim, that $x_2$ is a $\gamma^-$-critical vertex in $T'$. Then $\gamma(T'-x_2)=\gamma(T')-1=|A|-2$. But now, if $D$ is a $\gamma$-set of $T'-x_2$, then $D\cup\{x_1\}$ is a~dominating set of $T$ and $\gamma(T)\le |D\cup\{x_1\}|=|A|-1<\gamma(T)$, a contradiction. Consequently, since $x_2$ is not a $\gamma^-$-critical vertex in $T'$, the tree $T$ can be rebuilt from $T'$ by applying the operation ${\cal O}_4$ with the attacher $x_2$. This completes the proof. \end{proof}


\section{Algorithmic consequences}\label{sec:alg}

Our characterization of graphs belonging to the set ${\cal C}_{\gamma=\beta}$, given in Theorem~\ref{thm:gamma_B-nowa-wersja}, is non-practical from algorithmic point of view, since this characterization involves $\alpha$-sets. However Theorem~\ref{thm:main2} allows us to propose a quadratic-time algorithm for recognizing bipartite graphs with the domination number equal to the size of the smaller partite set. The idea of our algorithm follows that in~\cite{AJBT13}, with the only difference of a slightly more thorough running time analysis, based upon the following obvious lemma.

\begin{lem}\label{lem:n2}
In a bipartite graph $G=((A,B),E_G)$ of order $n$ there are at most $n/2$ subsets $\{x,y\} \subseteq A$ such that $d_G(x,y)=2$ and for which there are at least two vertices $\overline{x}$ and $\overline{y}$ in $B$ satisfying $N_G(\overline{x})= N_G(\overline{y}) = \{x,y\}$.
\end{lem}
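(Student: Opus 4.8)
The plan is to prove Lemma~\ref{lem:n2} by a counting argument that charges each qualifying pair $\{x,y\}\subseteq A$ against a disjoint pool of vertices in $B$, and then bound the total number of such pairs by the available budget of vertices. The key observation is that the hypothesis is strong: for a pair $\{x,y\}$ to qualify, it is not enough that $d_G(x,y)=2$; the graph must contain \emph{at least two} vertices $\overline{x},\overline{y}\in B$ whose neighborhood is \emph{exactly} $\{x,y\}$, i.e.\ two distinct private ``witnesses'' of degree $2$ attached to precisely this pair. I would make this explicit first: for each qualifying pair $P=\{x,y\}$, fix two distinct witness vertices $w_P^1, w_P^2 \in B$ with $N_G(w_P^i)=\{x,y\}$.

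Next I would argue that the witness vertices are not reused across distinct pairs. Suppose $P=\{x,y\}$ and $Q=\{x',y'\}$ are two distinct qualifying pairs and that some vertex $w\in B$ serves as a witness for both. Then $N_G(w)=\{x,y\}$ and $N_G(w)=\{x',y'\}$ simultaneously, forcing $\{x,y\}=\{x',y'\}$, hence $P=Q$. Therefore the map sending a qualifying pair $P$ to its unordered set of two witnesses $\{w_P^1,w_P^2\}$ has the property that these two-element witness sets are pairwise disjoint: distinct pairs receive disjoint pairs of witnesses. Consequently, if there are $t$ qualifying subsets of $A$, they consume at least $2t$ distinct vertices of $B$, so $2t\le |B|\le n$, giving $t\le n/2$. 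This is exactly the claimed bound.

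I expect the only point requiring a little care is the disjointness of witness sets, but as shown above it is immediate from the fact that each witness has neighborhood \emph{equal} to its pair; there is no room for a witness to simultaneously certify two different pairs. A secondary bookkeeping point is to confirm that the witnesses lie in $B$ (guaranteed by the statement) and are distinct from the endpoints $x,y\in A$ (automatic, since $A$ and $B$ are disjoint partite sets), so the $2t$ witnesses are genuinely counted inside $B$ and not double-counted against $A$. One could even sharpen the bound to $2t\le |B|$, but $n/2$ suffices and matches the intended use in the running-time analysis, where the quantity $t$ controls the number of candidate pairs that condition~(b) of Corollary~\ref{wniosek5dladrzew} (or property~(b) of Theorem~\ref{thm:main2}) must be checked against.

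Thus the whole argument is essentially an injection-plus-pigeonhole: the main conceptual step is recognizing that the ``two exact-neighborhood witnesses'' condition privately earmarks two vertices of $B$ per qualifying pair, and no real obstacle remains beyond writing this disjointness cleanly.
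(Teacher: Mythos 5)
Your proposal is correct and is precisely the argument the paper has in mind: the lemma is stated there without proof as ``obvious,'' and the intended justification is exactly your observation that each witness $w\in B$ has $N_G(w)$ \emph{equal} to its pair, so no witness can certify two distinct pairs, whence $t$ qualifying pairs consume at least $2t$ distinct vertices of $B$ and $2t\le |B|\le n$ gives $t\le n/2$. Nothing is missing; your extra bookkeeping remarks (witnesses lie in $B$, disjoint from $A$) are sound and the sharper bound $t\le |B|/2$ is indeed available but not needed.
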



The next corollary is immediate from Theorem~\ref{thm:main2} and Lemma~\ref{lem:n2}.

\begin{cor}\label{cor:n2}
Let $G=((A,B),E_G)$ be a connected $n$-vertex bipartite graph with $1\le |A| \le |B|$. If $\gamma(G)=|A|$, then there are at most $n/2$ subsets $\{x,y\} \subseteq A -(L_G\cup S_G)$ for which $d_G(x,y)=2$.
\end{cor}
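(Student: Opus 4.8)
The plan is to derive the bound by chaining the two previously established results: Theorem~\ref{thm:main2} converts the hypothesis $\gamma(G)=|A|$ into the structural property~(b), and Lemma~\ref{lem:n2} then bounds the number of pairs that (b) produces. Since the statement is advertised as immediate, the whole task reduces to verifying that the set of pairs we wish to count is contained in the set of pairs already counted by Lemma~\ref{lem:n2}.

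First I would invoke Theorem~\ref{thm:main2}. Because $G$ is a connected bipartite graph with $1\le|A|\le|B|$ and $\gamma(G)=|A|$, statement~(1) of that theorem holds, hence so does statement~(3); in particular $G$ satisfies property~(b). Spelled out, this says that for \emph{every} pair $\{x,y\}\subseteq A-(L_G\cup S_G)$ with $d_G(x,y)=2$ there exist at least two vertices $\overline{x},\overline{y}\in B$ with $N_G(\overline{x})=N_G(\overline{y})=\{x,y\}$.

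Next I would observe the key containment. Let $\mathcal{P}$ be the collection of all pairs $\{x,y\}\subseteq A-(L_G\cup S_G)$ with $d_G(x,y)=2$, and let $\mathcal{Q}$ be the collection of all pairs $\{x,y\}\subseteq A$ with $d_G(x,y)=2$ for which there are at least two vertices $\overline{x},\overline{y}\in B$ satisfying $N_G(\overline{x})=N_G(\overline{y})=\{x,y\}$. Property~(b), established above, shows exactly that every element of $\mathcal{P}$ also lies in $\mathcal{Q}$, so $\mathcal{P}\subseteq\mathcal{Q}$. Lemma~\ref{lem:n2} gives $|\mathcal{Q}|\le n/2$, and therefore $|\mathcal{P}|\le n/2$, which is the claimed bound.

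There is essentially no genuine obstacle here; the only point requiring a moment of care is the bookkeeping of which family of pairs each result speaks about. Theorem~\ref{thm:main2}(b) concerns pairs drawn from $A-(L_G\cup S_G)$, whereas Lemma~\ref{lem:n2} counts pairs drawn from all of $A$ subject to the twin-neighborhood condition. The containment $\mathcal{P}\subseteq\mathcal{Q}$ is what bridges the two, and once that is noted the bound transfers verbatim. I would keep the write-up to a few lines, making the inclusion explicit so that the reader sees precisely how property~(b) feeds into the hypothesis of Lemma~\ref{lem:n2}.
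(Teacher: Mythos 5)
Your proposal is correct and follows exactly the route the paper intends: the paper states the corollary is ``immediate from Theorem~\ref{thm:main2} and Lemma~\ref{lem:n2}'', and your argument---applying Theorem~\ref{thm:main2} to obtain property~(b) and then noting the containment of the relevant pair families so that Lemma~\ref{lem:n2} applies---is precisely that deduction, spelled out. Nothing is missing; the explicit inclusion $\mathcal{P}\subseteq\mathcal{Q}$ is the right (and only) point of care.
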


\begin{thm}\label{thm:algo}
If  $G=((A,B),E_G)$ is a connected $n$-vertex bipartite graph with $1\le |A| \le |B|$, then the equality $\gamma(G)=|A|$ can be verified in $O(n^2)$ time.
\end{thm}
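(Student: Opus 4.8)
The plan is to use the characterization from Theorem~\ref{thm:main2}, which reduces verifying $\gamma(G)=|A|$ to checking the two structural conditions (a) and (b), together with the counting bound of Corollary~\ref{cor:n2}, which guarantees that at most $n/2$ pairs need to be examined in the critical subcase. The algorithm proceeds as follows. First I would compute, in $O(n+|E_G|)=O(n^2)$ time, the basic structural data: the degrees of all vertices, the leaf set $L_G$, the support set $S_G$, and (to distinguish weak supports) the number of leaf-neighbors of each support. All of this is a single pass over the adjacency lists.

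Next I would verify condition (a) directly: for each support vertex $v\in B$, check that $v$ has exactly one leaf-neighbor (weak support) and that every non-leaf neighbor of $v$ lies in $S_G$. Summing over all such $v$, this costs $O(\sum_{v}\deg_G(v))=O(|E_G|)=O(n^2)$ time, since each edge is inspected a constant number of times. The main work is condition (b). Here I would iterate over the vertices $z\in B-(L_G\cup S_G)$, and for each such $z$ enumerate the pairs $\{x,y\}\subseteq N_G(z)\cap(A-(L_G\cup S_G))$ with $d_G(x,y)=2$; by the equivalent reformulation in Corollary~\ref{wniosek5dladrzew} (or directly), condition (b) amounts to checking, for each such pair realized through some $z$, that there exist at least two common neighbors $\overline{x},\overline{y}\in B$ with $N_G(\overline{x})=N_G(\overline{y})=\{x,y\}$.

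The key efficiency point—and the step I expect to be the main obstacle—is bounding the enumeration of these pairs. A naive count of pairs $\{x,y\}\subseteq A$ at distance $2$ could be quadratic \emph{per vertex} and hence cubic overall. The resolution is Corollary~\ref{cor:n2}: if $\gamma(G)=|A|$ actually holds, then the total number of relevant pairs $\{x,y\}\subseteq A-(L_G\cup S_G)$ with $d_G(x,y)=2$ is at most $n/2$. So I would run the enumeration with an abort threshold: maintain a count of distinct qualifying pairs discovered, and if this count ever exceeds $n/2$, halt immediately and report $\gamma(G)\neq|A|$ (since by the corollary the hypothesis must then fail). While the count stays bounded by $n/2$, only $O(n)$ pairs are ever generated, and for each pair the verification of having two common neighbors with neighborhood exactly $\{x,y\}$ costs $O(n)$ time (scanning the relevant adjacency lists), giving $O(n^2)$ overall. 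To detect and deduplicate pairs efficiently one can use a hash table or, since we stop after $n/2$ distinct pairs, a simple marking scheme keyed on the pair.

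Putting the pieces together: the preprocessing is $O(n^2)$, the check of (a) is $O(n^2)$, and the bounded enumeration and per-pair verification for (b) is $O(n)\cdot O(n)=O(n^2)$, with early termination guaranteeing we never exceed this even when the property fails. Hence the whole procedure runs in $O(n^2)$ time, and by Theorem~\ref{thm:main2} it correctly decides whether $\gamma(G)=|A|$. The only subtlety requiring care is the logical use of Corollary~\ref{cor:n2} as a one-sided certificate: exceeding the $n/2$ threshold is used purely as grounds for rejection, so correctness of acceptance still rests entirely on the explicit verification of (a) and (b).
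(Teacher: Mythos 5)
Your proposal is correct and follows essentially the same route as the paper: both reduce the problem to checking conditions (3a) and (3b) of Theorem~\ref{thm:main2} and tame the potentially $\Theta(\sum_{v \in V_G}\deg^2(v))$ pair-checking step via the $n/2$ bound of Lemma~\ref{lem:n2}/Corollary~\ref{cor:n2}, the only differences being implementation-level (you deduplicate pairs and abort once the distinct-pair count passes $n/2$, while the paper stores common-neighbor multiplicities in an $O(n^2)$ adjacency matrix and stops at the first failing pair, observing that each pair is checked at most $n-2$ times; the paper also dispatches the $|A|=|B|$ case separately via the corona/$C_4$ test of Corollary~\ref{prop:corona_gamma}, which your argument handles uniformly). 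One slight imprecision worth fixing: before your abort triggers, up to $\Theta(n^2)$ pair \emph{occurrences} (not $O(n)$) may be enumerated across different common neighbors $z$, but since each occurrence costs $O(1)$ under your marking scheme and only the at most $n/2+1$ distinct pairs incur the $O(n)$ verification, the $O(n^2)$ total stands.
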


\begin{proof}
First observe that if $|A|=|B|$, then by Corollary~\ref{prop:corona_gamma} all we need is to verify whether $G$ is the cycle $C_4$ or a corona graph, which can be done in $O(n+m)$ time, where $m=|E_G|$. Thus assume $|A| < |B|$. Then it suffices to verify the properties  (3a) and (3b) in Theorem~\ref{thm:main2}. The first property can be easily verified in $O(n^2)$ time by identifying and distinctly marking non-leaf and support vertices. To verify the second property, we first determine and distinctly mark the vertices in $A'=A - (L_G\cup S_G)$ by using the prior markings. Next,  similarly as in~\cite{AJBT13}, by considering vertices in $B$ of degree two whose both neighbors are in~$A'$, we create a multigraph $M$ (represented by the adjacency matrix, constructed in $O(n^2)$ time) on the vertex set $A'$ in which the multiplicity of each edge joining two vertices is equal to the number of their common neighbors of degree two in $B$. Then, for each vertex $b \in B$, we construct the list $L(b)$ of vertices adjacent to $b$ in $A'$. Since each of these lists is of length at most~$n$, all that can be easily done in $O(n^2)$ time.  We continue by checking for each $b \in B$ and every two vertices $a$ and $a'$ belonging to $L(b)$ whether the multiplicity of edge $aa'$ in $M$ is at least $2$, and we stop whenever checking fails (as then $\gamma(G) \neq |A|$). As observed in~\cite{AJBT13}, this may require $\Theta(\sum_{b \in B} |L(b)|^2)=\Theta(\sum_{v \in V_G} \deg^2(v))$ time. However, since a subset $\{a,a'\}$ can be checked at most $n-2$ times, whereas at most $n/2$ of such subsets can be positively examined (by Corollary~\ref{cor:n2}), the algorithm always stops after $O(n^2)$ steps.
\end{proof}

Lemma~\ref{lem:n2} allows to conclude that all the graphs belonging to the set $\cC_{\gamma=\beta}$ can also be recognized in $O(n^2)$ time. Namely,  Arumugam et al.~\cite{AJBT13} proposed an algorithm that recognizes whether a given graph $G$ belongs to the set $\cC_{\gamma=\beta}$ in the claimed time complexity of $O(\sum_{v \in V_G} \deg^2(v))$. The most time consuming step in their algorithm, after a preprocessing step for determining the relevant bipartition $(A, B)$ of the subgraph $H$ resulting from $G$ by deleting all edges whose both end-vertices are support vertices, is to check whether for every non-support distinct vertices $u,v \in A$, if $u$ and $v$ have some common neighbor, then they have at least two common neighbors of degree two, which takes  $O(\sum_{b \in B} \deg^2_H(b))=O(\sum_{v \in V_G} \deg^2(v))$ time in total. However, similarly as in the proof of Theorem~\ref{thm:algo}, notice that a $2$-element subset $\{u,v\}$ of $A$ can be checked at most $n-2$ times, whereas at most $n/2$  such subsets can `pass the test' by Lemma~\ref{lem:n2} and Theorem~1.1 in~\cite{AJBT13}. Hence, we obtain the following corollary.

\begin{cor}\label{cor:algoC}
Let $G$ be an $n$-vertex graph without isolated vertices. Then the equality $\gamma(G)=\beta(G)$ can be verified in $O(n^2)$ time.\eop
\end{cor}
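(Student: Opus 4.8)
The plan is to adapt the running‑time analysis of the algorithm of Arumugam et al.~\cite{AJBT13} by replacing the naive bound $O(\sum_{v\in V_G}\deg^2(v))$, which can be cubic, with a quadratic bound using the same counting trick as in the proof of Theorem~\ref{thm:algo}. First I would recall the structure of their algorithm: after a preprocessing step that in $O(n+m)$ time (with $m=|E_G|$, and $m\le n^2$) constructs the auxiliary graph $H$ obtained from $G$ by deleting every edge both of whose endpoints are support vertices, and determines the relevant bipartition $(A,B)$ of the relevant part of $H$, the dominant cost lies in the verification step. That step checks, for every pair of distinct non‑support vertices $u,v\in A$ that share a common neighbour, whether they share at least two common neighbours of degree two in $B$. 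Implemented by iterating over each $b\in B$ and each pair in its adjacency list, this step costs $\Theta(\sum_{b\in B}\deg_H^2(b))=\Theta(\sum_{v\in V_G}\deg^2(v))$ in the worst case, which is the bottleneck.

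The key observation, exactly parallel to the proof of Theorem~\ref{thm:algo}, is twofold. On one hand, any fixed $2$‑element subset $\{u,v\}\subseteq A$ is examined at most once for each common neighbour $b\in B$, hence at most $n-2$ times in total. On the other hand, by Lemma~\ref{lem:n2} together with Theorem~1.1 of~\cite{AJBT13} (which guarantees that when $G\in\cC_{\gamma=\beta}$ the relevant pairs have the required double‑common‑neighbour structure), at most $n/2$ distinct subsets $\{u,v\}$ can ``pass the test,'' i.e.\ survive without causing the algorithm to reject $G$. Therefore I would argue that the total number of pair‑checks the algorithm performs before either halting with a negative answer or completing the verification is $O(n)\cdot O(n)=O(n^2)$: every failing pair triggers an immediate stop, so only the at most $n/2$ passing pairs can be revisited, each at most $n-2$ times, while a single failing pair accounts for at most $n-2$ checks as well.

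Concretely, the steps in order are: (i) run the preprocessing of~\cite{AJBT13} to obtain $H$ and the bipartition, noting this is $O(n+m)=O(n^2)$; (ii) perform the verification loop over $b\in B$ and over pairs in each adjacency list, but interleave it with the early‑stopping rule so that the first pair failing the double‑common‑neighbour condition terminates the computation and reports $\gamma(G)\ne\beta(G)$; (iii) bound the number of executed pair‑checks by combining the ``at most $n-2$ repetitions per pair'' bound with the ``at most $n/2$ surviving pairs'' bound from Corollary analogue via Lemma~\ref{lem:n2}, yielding $O(n^2)$; (iv) conclude that the whole algorithm runs in $O(n^2)$ time, which establishes the corollary. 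Since the graph has no isolated vertices, $\gamma$ and $\beta$ are well defined and the characterization of~\cite{AJBT13} applies.

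The main obstacle I anticipate is making the counting argument airtight for the \emph{non‑bipartite} setting: unlike Theorem~\ref{thm:algo}, here the graph $G$ is arbitrary and the relevant bipartite‑like structure only emerges after forming $H$ and restricting to non‑support vertices, so one must verify that Lemma~\ref{lem:n2} genuinely applies to the pairs being counted. The careful point is that the ``surviving'' pairs $\{u,v\}$ correspond precisely to pairs with at least two common degree‑two neighbours in the auxiliary bipartite structure, which is exactly the configuration Lemma~\ref{lem:n2} caps at $n/2$; once this correspondence is spelled out via Theorem~1.1 of~\cite{AJBT13}, the rest is the same bookkeeping as in Theorem~\ref{thm:algo}. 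The remaining details, such as confirming that the early‑stopping does not skip a required check and that the preprocessing cost is subsumed in $O(n^2)$, are routine.
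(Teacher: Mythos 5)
Your proposal is correct and follows essentially the same route as the paper: it invokes the algorithm of Arumugam et al.~\cite{AJBT13}, identifies the pair-checking loop over common neighbours as the bottleneck, and bounds the executed checks by combining the ``each pair examined at most $n-2$ times'' observation with the ``at most $n/2$ pairs can pass the test'' bound from Lemma~\ref{lem:n2} together with Theorem~1.1 of~\cite{AJBT13}, exactly as in the discussion preceding Corollary~\ref{cor:algoC}. Your extra care about why Lemma~\ref{lem:n2} applies after forming $H$ and restricting to non-support vertices is a sound elaboration of a point the paper leaves implicit, not a different argument.
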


It is a natural question to ask whether the running time analysis of our algorithm (and that in~\cite{AJBT13}) can be improved, say, to obtain the running time  $O(n^{2-\varepsilon})$ for some $\varepsilon >0$. Showing that there exists an infinite class of bipartite graphs $G=((A,B),E_G)$ in which the most time consuming step of our algorithm being of order $\sum_{b \in B} |L(b)|^2$ can be $\Theta(n^{2})$, we prove that this question has a negative answer. Let $n$ and $p$ be integers, where $n\ge 16$ and $p=\lfloor \sqrt{n}/2 \rfloor$. The multigraph of order $p$ in which every two vertices are joined by exactly two multiple edges is denoted by $K_p''$. Now let $G$ be the graph obtained from the join $K_p''+\overline{K}_{n-p^2}$ by subdividing each of its double edges exactly once (as illustrated in Fig.~\ref{fig-graph}, where $S(K_p'')$ is the subdivision graph of $K_p''$). It is obvious that $G$ is a leafless bipartite graph in which $A=V_{K_p''}$ and $B=V_G-A$ are partite sets and $1<|A|=p<n-p= |B|$. Because sets $L_G$ and $S_G$ are empty, and for every distinct vertices $x$ and $y$ in $A$ there are distinct vertices $\overline{x}$ and $\overline{y}$ in~$B$ such that $N_G(\overline{x})= N_G(\overline{y})= \{x,y\}$, it follows from Theorem~\ref{thm:main2} that $\gamma(G)=|A|$. On the other hand, it immediately follows from the construction of $G$ that $\sum_{v \in B} \deg^2_G(v)= \Theta(n^2)$, and so our algorithm (and that in~\cite{AJBT13}), when executed on $G$, will stop only after $\Theta(n^2)$ steps.

\begin{figure*}[!t] \begin{center}

{\epsfxsize=2in \epsffile{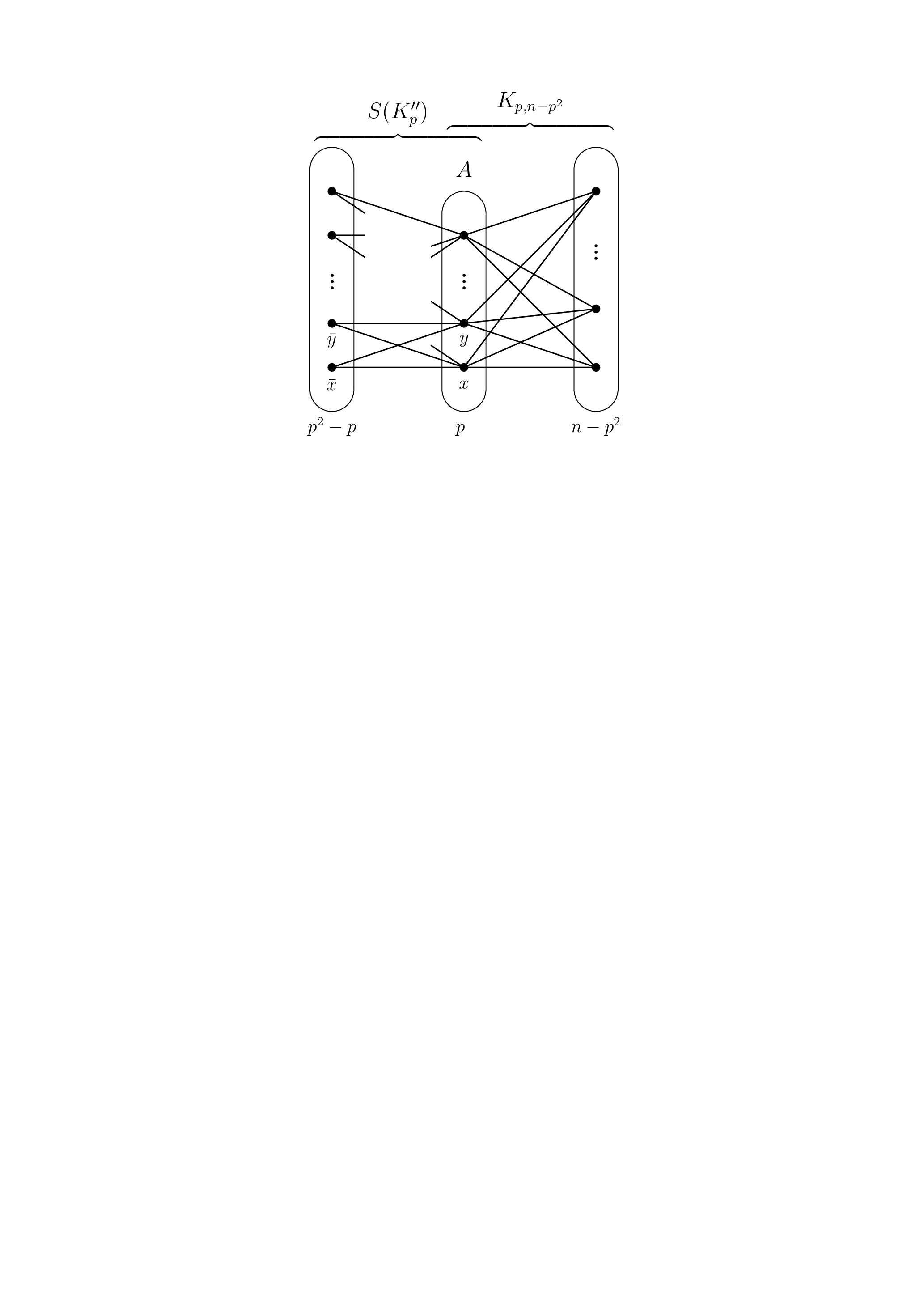}}

\vspace{-2mm}
\caption{Bipartite graph $G$ with $\sum_{v \in V_G-A} \deg^2_G(v)=\Theta(n^2)$.}\label{fig-graph}
\end{center}\end{figure*}

Finally, Theorem~\ref{thm:algo} itself has also a practical application in guarding grids. Let ${\cS}=\{S_1,S_2,\ldots,S_n \}$ be a family of distinct vertical and horizontal closed line segments in the plane, where every two collinear line segments are disjoint. This family is called a {\em grid\/} if $n\ge 2$ and the union $\bigcup{\cS}= S_1 \cup S_2 \cup \cdots \cup S_n$ is a~connected subset of $\mathbb{R}^2$. The intersection graph of the family ${\cS}$ is denoted by $G_{\cS}$ (and it is the graph with vertex-set ${\cS}$ and edge-set $\{S_iS_j\colon i\not=j, \,\,S_i, S_j\in {\cS},\,\, \mbox{and} \,\, S_i\cap S_j\not=\emptyset\}$). It is obvious that if ${\cS}$ is a grid, then $G_{\cS}$ is a connected bipartite graph and the pair $(V_{\cS},H_{\cS})$ is its bipartition, where $V_{\cS}$ ($H_{\cS}$, resp.) is the set of all vertical (horizontal, resp.) line segments of ${\cS}$~\cite{Nta86}. An example of a~grid $\cS$ and the intersection graph $G_{\cS}$ (corresponding to the family ${\cS}= V_{\cS}\cup H_{\cS}$, where $V_{\cS}=\{x, y,z, u,v\}$ and $H_{\cS}=\{a, b, c, d, e, f, g, h\}$) are shown in Fig.~\ref{fig:grid-example}.

\begin{figure}[!ht] \begin{center}
\vspace{2mm}

{\epsfxsize=5.5in \epsffile{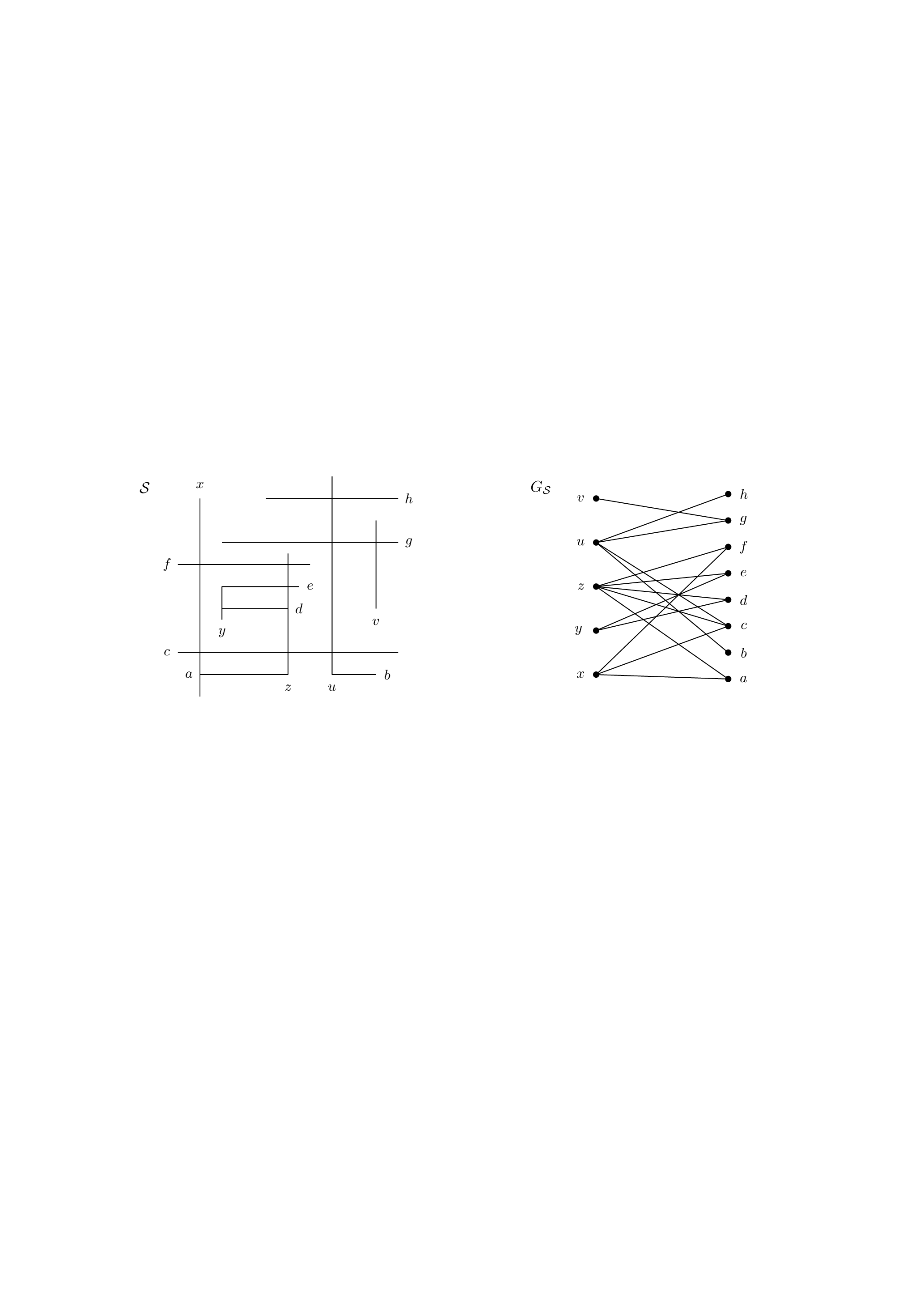}}

\vspace{-3mm}
\caption{Grid $\cS$ and its intersection graph $G_\cS$.\label{fig:grid-example}}
\end{center} \end{figure}

A {\em mobile guard\/} is a guard traveling along a line segment of a grid, and patrolling this line segment and all the intersected line segments.  We identify a mobile guard traveling along a line segment $x$ with the same segment $x$ and say that $x$ patrols itself and the line segments intersected by~$x$. A set $\cC\subseteq \cS$ of mobile guards  is called a {\em patrolling set\/} of the grid $\cS$ if every line segment $x\in {\cal S}$ is either an element of ${\cC}$ or is intersected by an element of ${\cC}$. The problem of mobile guards patrolling a grid is a variant of the traditional art gallery problem and it was formulated by Ntafos~\cite{Nta86}, and then its next variants were studied in a number of papers, see, for example,  \cite{BCCHKLT13}--\cite{FHMPP16}, \cite{KMN03,KPR17} and~\cite{TJ18}. Katz et al.~\cite{KMN03} observed that the decision problem associated with finding the minimum cardinality of a~set of mobile guards patrolling a given grid is \textnormal{NP}-complete. On the other hand it is obvious that all vertical line segments of a grid $\cS$ (as well as all horizontal line segments of $\cS$) form a~patrolling set of $\cS$ and therefore the smallest number of mobile guards patrolling $\cS$ is at most $\min \{|V_\cS|,|H_\cS|\}$. A grid $\cS$ is said to be {\em extremal\/} if the number of mobile guards required to patrol $\cS$ is equal to $\min \{|V_\cS|,|H_\cS|\}$, and we refer to the problem of recognizing extremal grids as the {\em extremal guard cover problem}. From the obvious fact that a~subset $\cC$ of a~grid $\cS$ is a~set of mobile guards patrolling ${\cS}$ if and only if $\cC$ is a dominating set of the intersection graph~$G_\cS$ (see~\cite{KMN03}), it follows that $\cS$ is an extremal grid if and only if $G_\cS$ is a bipartite graph in which the domination number is equal to the cardinality of the smaller of bipartite sets of the graph $G_\cS$. Thus the extremal guard cover problem for a~grid $\cS$ with $n$ line segments can be solved by considering the intersection graph~$G_\cS$ (which can be constructed in $O(n \log n + m)$ time, where $m=O(n^2)$ is the number of intersection points of line segments of $\cS$, see~\cite{Bal95}) and then recognizing whether the domination number of $G_\cS$ is equal to $\min \{|V_\cS|,|H_\cS|\}$, which can be done in $O(n^2)$ time (by Theorem~\ref{thm:algo}). However, since here geometry is involved and intersection graphs of grids form a restricted class of bipartite graphs, some significant improvement in the last statement is possible. We begin with the following lemma.

\begin{lem}\label{lem:upper4}
Let $G_\cS = ((A,B),E_{G_\cS})$ be the intersection graph of a~grid~$\cS$. If $G_\cS$ is leafless and for any two distinct vertices $x$ and $y$ belonging to $A$, and having a common neighbor, there exists a vertex $z$ in~$B$ such that $N_{G_\cS}(z)=  \{x,y\}$, then $\max\{\deg_{G_\cS}(b)\colon b \in B\}  \le 4$.
\end{lem}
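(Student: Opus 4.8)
The plan is to turn the statement into a one–dimensional claim about the vertical extents of the segments crossing a fixed $b\in B$, and then to finish with a short finite case analysis. Rotating the plane if necessary, I assume the segments of $A$ are vertical and those of $B$ horizontal. Fix $b\in B$ and let $x_1,\dots,x_k$ be the vertical segments crossing $b$, listed by increasing $x$-coordinate $c_1<\dots<c_k$ (these are distinct, since two collinear vertical segments are disjoint, so two of them cannot both contain the crossing point on $b$). Here $k=\deg_{G_\cS}(b)$, so I must show $k\le 4$; suppose for contradiction that $k\ge 5$. For each $i$ write $u_i\ge y_b$ for the top and $d_i\le y_b$ for the bottom of $x_i$, where $y_b$ is the height of $b$.

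The hypothesis supplies, for every pair $x_i,x_j$ (they share the common neighbor $b$), a segment $z_{ij}\in B$ with $N_{G_\cS}(z_{ij})=\{x_i,x_j\}$. The key geometric reading of $z_{ij}$ is: it is horizontal, it contains the two crossing points on $x_i$ and $x_j$ and hence spans all of $[c_i,c_j]$, and its height $h_{ij}$ differs from $y_b$ (otherwise $z_{ij}$ and $b$ would be collinear yet meet on $x_i$). Since $z_{ij}$ crosses no vertical other than $x_i,x_j$, every $x_l$ with $i<l<j$ fails to reach height $h_{ij}$. Thus each pair is of exactly one of two types: \emph{above} ($h_{ij}>y_b$), forcing $u_l<h_{ij}\le\min(u_i,u_j)$ for all $i<l<j$, or \emph{below} ($h_{ij}<y_b$), forcing $d_l>h_{ij}\ge\max(d_i,d_j)$ for all $i<l<j$. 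It suffices to rule out $k=5$: for larger $k$ I would run the same argument on $x_1,x_k$ together with any three neighbors strictly between them, the outer pair forcing all intermediate tops below its witness height.

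Next I would normalize. Applying the dichotomy to the outer pair $(x_1,x_5)$ and reflecting across a horizontal line (which swaps the roles of the $u_i$ and $d_i$) if it is of below type, I may assume $(x_1,x_5)$ is above; then $u_2,u_3,u_4<\min(u_1,u_5)$, so $u_1,u_5$ dominate the three middle tops. This collapses the above-condition of each remaining pair to a single strict inequality among $u_2,u_3,u_4$ (for example $(x_1,x_3)$ above means $u_2<u_3$, and $(x_2,x_4)$ above means $u_3<\min(u_2,u_4)$), while each below-condition becomes a strict inequality among the $d_i$ (for example $(x_1,x_3)$ below gives $d_2>d_3$, and $(x_2,x_5)$ below gives $d_3>d_2$).

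The crux is then a finite check that no assignment of the six nontrivial pairs to the two types is consistent. I would split on the position of $u_3$ relative to $u_2$ and $u_4$ (four cases, with equalities absorbed into the ``$\ge$'' side). In each case two specific pairs are forced to be of below type, and their down–extent inequalities contradict each other outright: e.g.\ if $(x_1,x_3)$ and $(x_2,x_5)$ are both below then $d_2>d_3$ and $d_3>d_2$; if $(x_3,x_5)$ and $(x_1,x_4)$ are both below then $d_4>d_3$ and $d_3>d_4$; and symmetric pairs such as $\{(x_2,x_4),(x_3,x_5)\}$, $\{(x_1,x_3),(x_2,x_4)\}$, $\{(x_1,x_4),(x_2,x_5)\}$ cover the rest. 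This contradiction yields $k\le 4$. The main obstacle is organizing this last step so that the four cases are genuinely exhaustive and each pins down a conflicting pair of below-constraints; the one delicate point is that ties among the $u_i$ can only enlarge the set of pairs forced to be below (since the above-conditions are strict), so they never let a configuration escape the analysis.
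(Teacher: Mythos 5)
Your proposal is correct in substance but takes a genuinely different route from the paper. The paper's proof is topological: assuming a horizontal segment $h$ meets five verticals $v_1,\dots,v_5$ at points $C_1,\dots,C_5$, it uses the witness segments $h_{ij}$ (those with $N_{G_\cS}(h_{ij})=\{v_i,v_j\}$) to draw, for each pair $i<j$, a polygonal chain $C_iL^+_{ij}R^-_{ij}C_j$ obtained by shrinking $h_{ij}$ horizontally by a small $\varepsilon>0$; because each $h_{ij}$ meets no vertical other than $v_i$ and $v_j$, these ten chains form a planar drawing of $K_5$, contradicting its non-planarity. You replace this with an elementary order-theoretic analysis of the tops $u_i$ and bottoms $d_i$: each witness lies strictly above or strictly below $b$ and forces all intermediate tops (resp.\ bottoms) strictly under (resp.\ over) its height, and a finite case check rules out degree five. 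All of your geometric reductions are right: the witnesses are horizontal since the bipartition of the connected bipartite graph $G_\cS$ is $(V_\cS,H_\cS)$, they span $[c_i,c_j]$, the height $h_{ij}\neq y_b$ by disjointness of collinear segments, and the strict above/below dichotomy is exactly as you state. As for what each approach buys: the paper's argument is shorter and conceptually crisper once one grants that the perturbed chains are pairwise internally disjoint (a point the paper asserts rather than verifies), and it makes the constant $4$ transparent as an avatar of the planarity of $K_4$ versus the non-planarity of $K_5$; your argument is fully self-contained, invokes no planarity theory, and is verifiable inequality by inequality.

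One caveat in your final step: your claim that in each of the four cases \emph{two} pairs are forced below and contradict outright fails in the case $u_3<u_2$ and $u_3<u_4$. There the only forced-below pairs are $(x_1,x_3)$ and $(x_3,x_5)$, whose conditions $d_2>d_3$ and $d_4>d_3$ are mutually consistent. You need the extra observation that $(x_1,x_4)$ and $(x_2,x_5)$ cannot both be of above type (their collapsed conditions are $u_2<u_4$ and $u_4<u_2$), and whichever is below clashes with a forced pair: $(x_1,x_4)$ below gives $d_3>d_4$ against $(x_3,x_5)$, while $(x_2,x_5)$ below gives $d_3>d_2$ against $(x_1,x_3)$. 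Both conflicting pairs already appear on your list, so this is a repairable organizational slip rather than a gap; with that sub-split your case analysis is exhaustive and the proof is complete.
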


\begin{proof} Without loss of generality assume that $A=V_{\cS}$, $B=H_{\cS}$, and suppose that some horizontal line segment $h$ intersects five vertical line segments $v_1, v_2,\ldots, v_5$, say at points $C_1(x_1,y_0), C_2(x_2,y_0),\ldots, C_5(x_5,y_0)$, respectively, with $x_1< x_2< \ldots< x_5$. Let~$\varepsilon$ be a real number such that $0<\varepsilon\le  \min\{x_{i+1}-x_i \colon i=1, \ldots,4 \}/100$. Since the line segment~$h$ (as a vertex of $G_\cS$) is a common neighbor of every two distinct line segments belonging to the set $\{v_1, \ldots, v_5\}$, by assumption for every two indexes $i$ and $j$ ($1\le i<j\le 5$) there exists a horizontal line segment $h_{ij}$ which intersects the line segments $v_i$ and $v_j$ only. Assume that $h_{ij}$ intersects  $v_i$ and $v_j$ at points $L_{ij}(x_i,y_{ij})$ and $R_{ij}(x_j,y_{ij})$, respectively, for some~$y_{ij}$.  These points  define two new points $L^+_{ij}(x_i+\varepsilon,y_{ij})$ and $R^-_{ij}(x_j-\varepsilon,y_{ij})$, respectively,  which lie between $L_{ij}$ and $R_{ij}$ (see Fig.~\ref{fig:K5} for an illustration). Now it follows from the choice of $\varepsilon$ that the points $C_1, C_2,\ldots, C_5$ together with the polygonal chains $C_iL^+_{ij}R^-_{ij}C_j$, where $1\le i<j\le 5$,  form a~planar representation of the complete non-planar graph $K_5$, a~contradiction.
\end{proof}

\begin{figure}[h!]
\begin{center}

{\epsfxsize=2.3in \epsffile{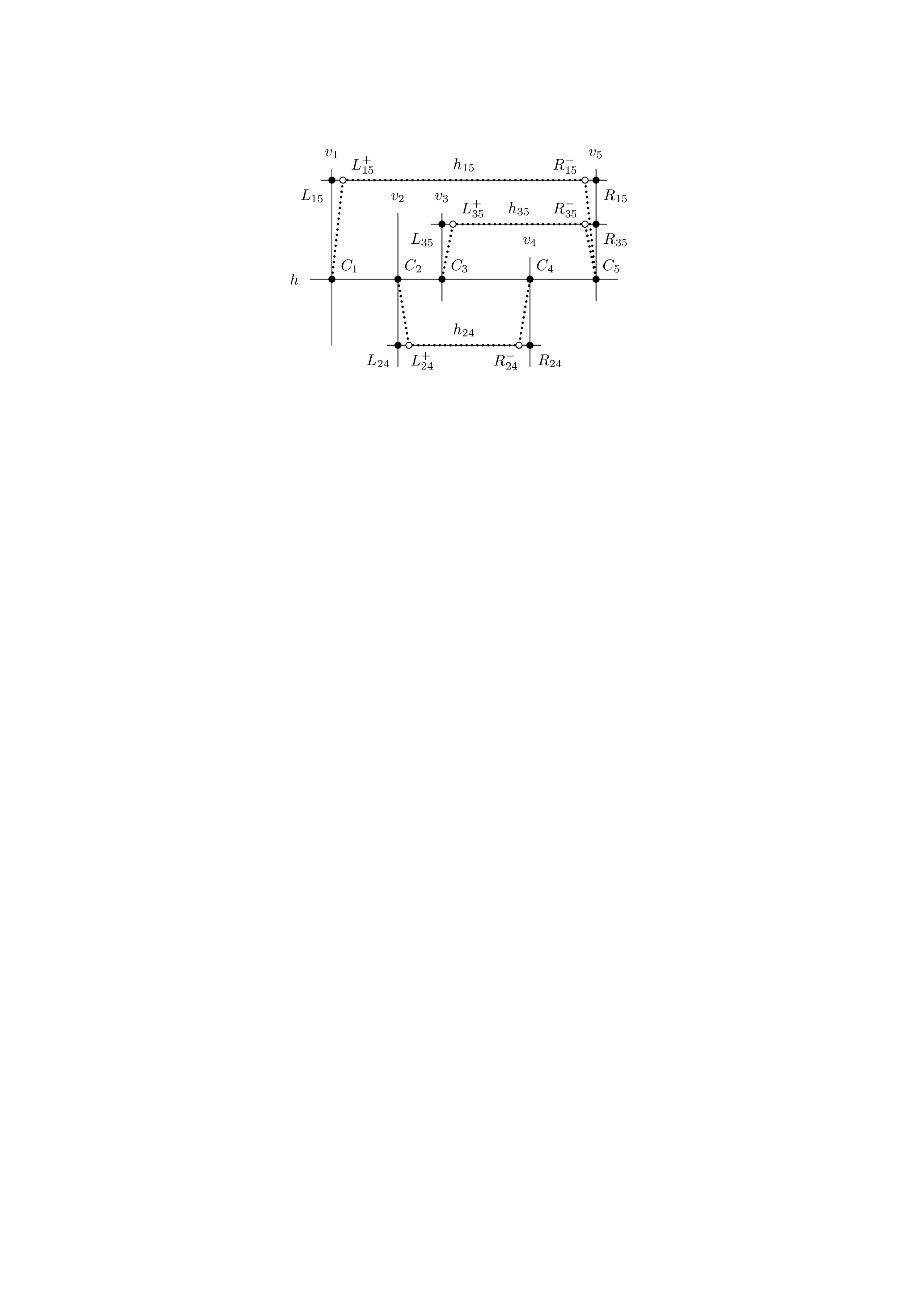}}

\vspace{-1mm}
\caption{}\label{fig:K5}
\end{center}
\end{figure}

The next corollary is immediate from Lemma \ref{lem:upper4} and  Theorem~\ref{thm:main2}.

\begin{cor}\label{cor:4-grids}
Let $G_\cS = ((A,B),E_{G_\cS})$ be the intersection graph of a grid~$\cS$. If $1 \le |A| \le |B|$ and $\gamma(G_{\cS})=|A|$,  then $|N_{G_{\cS}}(b) -(L_{G_{\cS}}\cup S_{G_{\cS}})| \le 4$ for every $b$ in $B$.
\end{cor}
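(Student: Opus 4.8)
The plan is to derive Corollary~\ref{cor:4-grids} directly from Lemma~\ref{lem:upper4} by verifying that the hypotheses of the lemma hold for a suitably modified grid, once we invoke Theorem~\ref{thm:main2} to translate the assumption $\gamma(G_{\cS})=|A|$ into structural information. First I would note that, by Theorem~\ref{thm:main2}, the equality $\gamma(G_{\cS})=|A|$ is equivalent to the two properties (3a) and (3b) holding for $G_{\cS}$. Property (3b) says precisely that whenever $x,y \in A-(L_{G_{\cS}}\cup S_{G_{\cS}})$ have a common neighbor (i.e.\ $d(x,y)=2$), there are at least two vertices $\overline{x},\overline{y}$ in $B$ with $N(\overline{x})=N(\overline{y})=\{x,y\}$; in particular at least one such $z$ exists, which is exactly the hypothesis needed by Lemma~\ref{lem:upper4}.

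The main issue is that Lemma~\ref{lem:upper4} is stated for a \emph{leafless} intersection graph, whereas $G_{\cS}$ may have leaves and supports, and property (3b) only controls pairs in $A-(L_{G_{\cS}}\cup S_{G_{\cS}})$, not all of $A$. So the plan is to pass to the subgrid obtained by deleting the line segments of $G_{\cS}$ that correspond to leaves and supports, or more precisely to restrict attention to the vertices of $A':=A-(L_{G_{\cS}}\cup S_{G_{\cS}})$. I would fix a vertex $b\in B$ and consider the set $N_{G_{\cS}}(b)\cap A'$, i.e.\ the neighbors of $b$ that are neither leaves nor supports. Any two distinct such vertices $x,y$ have the common neighbor $b$, hence by (3b) there is a $z\in B$ with $N_{G_{\cS}}(z)=\{x,y\}$. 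This is exactly the configuration Lemma~\ref{lem:upper4} forbids when there are five of them: the geometric $K_5$ argument of the lemma applies verbatim to the five vertical segments corresponding to vertices in $N_{G_{\cS}}(b)\cap A'$ (assuming without loss of generality $A=V_{\cS}$, $B=H_{\cS}$), using the horizontal segment $h$ corresponding to $b$ as the common neighbor and the segments $z$ as the edge-realizing segments $h_{ij}$. Therefore $|N_{G_{\cS}}(b)\cap A'| = |N_{G_{\cS}}(b)-(L_{G_{\cS}}\cup S_{G_{\cS}})| \le 4$.

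The hardest part will be making rigorous the claim that the planar-$K_5$ argument of Lemma~\ref{lem:upper4} can be applied to the restricted neighborhood $N_{G_{\cS}}(b)\cap A'$ even though the ambient graph $G_{\cS}$ is not leafless: one must check that the segments $z=h_{ij}$ supplied by (3b) genuinely intersect only the two intended vertical segments $x,y$ (so that the chains $C_iL^+_{ij}R^-_{ij}C_j$ do not spuriously cross each other through some third vertical segment), which is guaranteed precisely because $N_{G_{\cS}}(z)=\{x,y\}$ has exactly two elements. Since the five chosen vertical segments are all distinct members of $A'$ and the disjointness/planarity constructions in Lemma~\ref{lem:upper4} depend only on the coordinates of these five segments and of the two-neighbor segments $h_{ij}$, the construction goes through unchanged, and we obtain the contradicting planar embedding of $K_5$. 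This completes the bound $|N_{G_{\cS}}(b)-(L_{G_{\cS}}\cup S_{G_{\cS}})|\le 4$ for every $b\in B$, which is the assertion of the corollary.
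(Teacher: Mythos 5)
Your proposal is correct and follows essentially the same route the paper intends when it calls the corollary ``immediate'' from Theorem~\ref{thm:main2} and Lemma~\ref{lem:upper4}: the implication (1)$\Rightarrow$(3b) of Theorem~\ref{thm:main2} supplies, for each pair of non-leaf, non-support neighbors $x,y$ of a fixed $b\in B$, a vertex $z\in B$ with $N_{G_\cS}(z)=\{x,y\}$, and the $K_5$-planarity argument from the proof of Lemma~\ref{lem:upper4} then yields $|N_{G_\cS}(b)-(L_{G_\cS}\cup S_{G_\cS})|\le 4$. Your observation that the lemma cannot be cited as a black box (its leafless hypothesis fails in general) and that one must instead re-run its geometric proof on the five segments of $N_{G_\cS}(b)\cap\bigl(A-(L_{G_\cS}\cup S_{G_\cS})\bigr)$, with planarity secured exactly because $N_{G_\cS}(z)=\{x,y\}$ rules out spurious crossings, correctly fills in the only detail the paper leaves implicit.
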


In our last theorem we present the announced time complexity upper bound for the extremal guard cover problem.

\begin{thm}\label{thm:algo_guards}
If ${\cS}$ is a grid with $n$ distinct line segments, then the extremal guard cover problem for ${\cS}$ can be solved in $O(n \log n + m)$ time, where $m$ is the number of intersection points of line segments of $\cS$.\end{thm}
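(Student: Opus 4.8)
The plan is to leverage the $O(n^2)$-time recognition algorithm from Theorem~\ref{thm:algo}, but to sharpen its running-time analysis using the geometric restriction established in Corollary~\ref{cor:4-grids}, namely that each vertex $b\in B$ has at most four non-leaf, non-support neighbors in $A$ whenever $\gamma(G_\cS)=|A|$. First I would construct the intersection graph $G_\cS=((V_\cS,H_\cS),E_{G_\cS})$; as noted in the text this takes $O(n\log n+m)$ time by the sweep-line method of~\cite{Bal95}, and it simultaneously yields the bipartition $(V_\cS,H_\cS)$ together with the degrees $\deg_{G_\cS}(b)$ for all segments. I would then set $A$ to be the smaller of $V_\cS$ and $H_\cS$, so that $1\le|A|\le|B|$, and dispose of the easy case $|A|=|B|$ first: by Corollary~\ref{prop:corona_gamma} it suffices to test whether $G_\cS$ is $C_4$ or a corona graph, which is a local check (each vertex is a leaf or a weak support) doable in $O(n+m)$ time.

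For the main case $|A|<|B|$, the approach is to verify properties (3a) and (3b) of Theorem~\ref{thm:main2} within the target bound. Property (3a)---that each support in $B$ is a weak support whose non-leaf neighbors are all supports---can be checked by first marking, in $O(n+m)$ time, all leaves and supports, and then scanning adjacency lists once more; the total work is linear in $n+m$. The subtle part is property (3b). As in the proof of Theorem~\ref{thm:algo} I would form the reduced set $A'=A-(L_{G_\cS}\cup S_{G_\cS})$ and, for each $b\in B$, the list $L(b)$ of neighbors of $b$ lying in $A'$. The crucial observation is that we may abort and report failure the moment we encounter a vertex $b\in B$ with $|L(b)|\ge5$: by Corollary~\ref{cor:4-grids}, if $\gamma(G_\cS)=|A|$ then $|N_{G_\cS}(b)-(L_{G_\cS}\cup S_{G_\cS})|=|L(b)|\le4$ for every $b$, so a list of length five already certifies $\gamma(G_\cS)\neq|A|$. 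Hence we may assume $|L(b)|\le4$ for all $b$, whence the pairwise check ``for every $b$ and every two $a,a'\in L(b)$, does $aa'$ have multiplicity $\ge2$ in the auxiliary multigraph $M$?'' costs only $\binom{4}{2}=6$ comparisons per vertex $b$, for a total of $O(n)$ comparisons.

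The remaining cost is that of building the data structures supporting these comparisons. Rather than the $O(n^2)$ adjacency-matrix representation of $M$ used in Theorem~\ref{thm:algo}, I would record, for each pair $\{a,a'\}\subseteq A'$ arising as the neighbor set of some degree-two vertex of $B$, a running count of such common degree-two neighbors, keyed by the pair $\{a,a'\}$ via hashing (or by radix-sorting the $O(n)$ generated pairs, since $|A'|\le n$). Because every $b\in B$ contributes at most $\binom{4}{2}$ pairs once the length cap is in force, only $O(n)$ pairs are ever generated, each produced and looked up in $O(1)$ expected (or $O(1)$ amortized after sorting) time. The main obstacle, and the point demanding care, is precisely this interplay between the abort rule and correctness: one must argue that aborting on the first oversized list never discards a genuine member of $\cB$, which is exactly the content of Corollary~\ref{cor:4-grids}, and that the degree-two restriction in the definition of $M$ is compatible with (3b)'s requirement of two common neighbors whose neighborhoods equal $\{x,y\}$---a point already handled in~\cite{AJBT13} and reused verbatim here. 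With the cap in place every step after graph construction runs in $O(n+m)$ time (or $O(n\log n+m)$ if sorting is used in lieu of hashing), so the overall bound is dominated by the $O(n\log n+m)$ cost of constructing $G_\cS$, as claimed.
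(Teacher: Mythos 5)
Your proposal is correct and takes essentially the same approach as the paper's proof: construct $G_\cS$ in $O(n\log n+m)$ time via~\cite{Bal95}, settle $|A|=|B|$ by the corona test of Corollary~\ref{prop:corona_gamma}, verify property (3a) in linear time, and use Corollary~\ref{cor:4-grids} to abort as soon as some $|L(b)|\ge 5$, so that only $O(n)$ pairs from $A'$ ever need to be tested for having two common degree-two neighbors. The sole difference is implementational---you count pair multiplicities via hashing or radix sort, whereas the paper lexicographically sorts the list of pairs, eliminates singletons, and answers the per-$b$ queries by binary search---and either variant stays within the claimed $O(n\log n+m)$ bound.
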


\begin{proof}
Let $G_\cS=((A,B),E_{G_\cS})$ be the intersection graph of $\cS$ with $1 \le |A| \le |B|$. As already mentioned, the graph $G_\cS$ can be constructed in $O(n \log n + m)$ time~\cite{Bal95}, and thus it remains to show that in the same time we can verify whether $\gamma(G_\cS)=|A|$.

First, similarly as in the proof of Theorem~\ref{thm:algo},  if $|A|=|B|$, then we verify whether $G_\cS$ is $C_4$ or a corona graph, which can be done in $O(n+m)$ time. Thus assume $|A| < |B|$. In this case it suffices to verify the properties  (3a) and (3b) in Theorem~\ref{thm:main2}. The first property can be verified in $O(n+m)$ time by identifying and distinctly marking non-leaf and support vertices. To verify the second property, we first determine and distinctly mark the vertices in $A'=A - (L_{G_\cS}\cup S_{G_\cS})$ by using the prior markings. Next, in linear time we check if the inequality $\max_{b \in B} |N(b) - (L_{G_\cS} \cup S_{G_\cS})|  \le 4$ holds for every $b$ in $B$. If $\max_{b \in B} |N(b) - (L_{G_\cS} \cup S_{G_\cS})|  \ge 5$ for some $b\in B$, then we stop as $\gamma(G_\cS) \neq |A|$ (by Corollary~\ref{cor:4-grids}). Otherwise for each vertex in $B$ of degree two whose both neighbors $x$ and $y$ are in $A'$, we list the set $\{x,y\}$. Then we sort the resulting list $L$ of $2$-element sets lexicographically in order to compute the number of sets on the list that occur exactly once. If there is at least one such set, then we stop, as $\gamma(G_\cS) \neq |A|$ (by the property (5b) in Theorem~\ref{thm:main2}).  Otherwise, we continue by truncating the list to store now each set only once.  Since the original list is of length at most~$n$, all that can be done in $O(n \log n)$ time. Next, similarly as in~\cite{AJBT13}, for each vertex $b \in B$, we construct the list $L(b)$ of vertices adjacent to $b$ in $A'$. Since each of these lists is of length at most~$4$ (by the stop condition positively verified above), all that can be done in $O(n)$ time. Finally, using a binary search on $L$, for each $b \in B$ and each $2$-element subset $\{x, y\}$ of $L(b)$ we check whether $\{x,y\}$ is on the (truncated) list $L$, and we stop whenever checking fails (as then $\gamma(G_\cS) \neq |A|$).  This may require checking as many as $\Theta(\sum_{b \in B} |L(b)|^2)$ sets, however, since we handle the case $|L(b)|  \le 4$, the total running time of our algorithm becomes then $O(n \log n+m)$ as required.
\end{proof}

\paragraph{Acknowledgement.} This work was partially supported by National Science Centre, Poland, the grant number 2015/17/B/ST6/01887.

\end{document}